
\documentclass{article}

\hoffset=-1.6cm \voffset=-1.35cm \setlength{\textwidth} {15.cm}
\setlength{\textheight} {23.cm}

\usepackage{amsfonts} \addtolength{\headheight}{2pt}
\usepackage{amsmath,hhline,latexsym}
\usepackage{graphicx}
\usepackage{amssymb}
\usepackage[latin1]{inputenc}

\usepackage{float}
\usepackage[pdftex]{lscape}
\usepackage{amsmath}
\usepackage{amsthm}
\usepackage{amsfonts,amsmath,amssymb}
\usepackage{enumerate,verbatim,color}
\usepackage{epsfig}
\usepackage{textcomp}
\usepackage{graphicx}
\usepackage{amsfonts}
\usepackage{mathrsfs}
\usepackage{upgreek}
\usepackage{mathtools}
\usepackage{pdfpages}
\usepackage{hyperref}

\usepackage{color}

\newtheorem{theorem}{\sc Theorem}[section]
\newtheorem{lemma}{\sc Lemma}[section]

\newtheorem{corollary}{Corollary}

\newcommand{\iii}{{\, \vert\kern-0.25ex\vert\kern-0.25ex\vert\, }}

\newcommand{\N}{\mathbb{N}}
\newcommand{\R}{\mathbb{R}}

\newcommand{\C}{\mathbb{C}}

\newcommand{\dd}{\partial}

\newcommand{\ra}{\rangle}
\newcommand{\la}{\langle}

\pagestyle{myheadings}

\newcommand{
  {\resizebox{}{!}{\input .pstex_t}}
}[2]{
  {\resizebox{#1}{!}{\input #2.pstex_t}}
}

\def\R{{\bf R}}

\title{Achieving energy permutation of modes in the Schr\"odinger equation with moving Dirac potentials}
\date{}
\author{Carlos Castro\thanks{M2ASAI Universidad Polit\'ecnica de Madrid, Departamento de Matem\'atica e Inform\'atica, ETSI Caminos, Canales y Puertos, 28040 Madrid, Spain. E-mail:  {\tt carlos.castro@upm.es.}} and Alessandro Duca\thanks{Universit\'e Paris-Saclay, UVSQ,
Laboratoire de Math\'ematiques de Versailles, 78035 Versailles, France. E-mail:  {\tt alessandro.duca@uvsq.fr.}}}

\begin{document}

\maketitle

\begin{abstract}
In this work, we study the Schr\"odinger equation $i\dd_t\psi=-\Delta\psi+\eta(t)\sum_{j=1}^J\delta_{x=a_j(t)}\psi$ on $L^2((0,1),\C)$ where $\eta:[0,T]\longrightarrow \R^+$ and $a_j:[0,T]\longrightarrow (0,1)$, $j=1,...,J$. We show how to permute the energy associated to different eigenmodes of the Schr\"odinger equation via suitable choice of the functions $\eta$ and $a_j$. To the purpose, we mime the control processes introduced in \cite{mio7} for a very similar equation where the Dirac potential is replaced by a smooth approximation supported in a neighborhood of $x=a(t)$. We also propose a Galerkin approximation that we prove to be convergent and illustrate the control process with some numerical simulations.  
\end{abstract}

\section{Introduction} 
\label{sec:intro}

We consider the one-dimensional Schr\"odinger equation in the interval $(0,1)$ with a measure potential supported in a moving point, 
\begin{equation} \label{eq_1}
\left\{
\begin{array}{ll}
i\partial_t \psi=-\partial^2_{xx}\psi +\eta(t) \delta_{x=a(t)}\psi, & x\in (0,1), \; t>0,\\
\psi(t,0)=\psi(t,1)=0,\\
\psi(0)=\psi^0\in L^2((0,1),\C),
\end{array} 
\right.
\end{equation}
where $\eta:[0,T]\rightarrow \R^+$ and $a:[0,T]\rightarrow (0,1)$ are sufficiently smooth functions. 

\medskip

It is well-known that slow varying potentials produce adiabatic dynamics in which the energy associated to the different modes is conserved in time. In our case, this corresponds to sufficiently small variation of both $\eta(t)$ and $a(t)$. We show that we can choose these functions $\eta(t)$ and $a(t)$ in such a way that this adiabatic regime is broken to produce a swift of the energy associated to two different modes. This can be extended to obtain a prescribed permutation of an arbitrary finite number of modes by adding several potentials supported at different moving points $a_j(t)\in(0,1)$, $j=1,...,J$.  Therefore, if we further assume that $\eta(0)=\eta(T)=0$, then the terms $\eta(t) \delta_{x=a_j(t)}$ can be interpreted as controls to achieve energy permutations of the eigenmodes for the free Schr\"odinger equation. 
We also propose a convergent numerical method to approximate the solutions of (\ref{eq_1}) in order to simulate this control.  

\medskip

The control strategy we present here is not new and it has been adapted from the works \cite{mio7, Dmitry}. In \cite{Dmitry}, the author presents how to permute eigenmodes from a spectral point of view. He considers the spectrum associated to \eqref{eq_1} by assuming $\eta$ and $a$ as real numbers. He exhibits a smart path of the parameters $\eta$ and $a$ such that, if we follow the variations of the eigenvalues of the Hamiltonian associated to \eqref{eq_1}, then a permutation is performed.

Later on, \cite{mio7} addressed this same strategy from a dynamical point of view. The authors consider the equation \eqref{eq_1} by substituting the potential $\eta(t) \delta_{x=a(t)}$ with a smooth approximation:
\begin{equation}\begin{split}\label{eq_potentiel}
i\partial_t \psi=-\partial^2_{xx}&\psi +V(x,t)\psi,\\
V(x,t)~=~ \eta(t)\,\rho^{\eta(t)}(x-a(t)&)~~~~~\text{ with }~~~\rho^\eta=\eta\,\rho(\eta\,\cdot\,)~.
\end{split}\end{equation}
Here, $\rho\in C^2(\R,\R^+)$ is a non-negative function with support $[-1,1]$ such that $\int_\R \rho(s) d s=1.$ The work shows how to control the eigenmodes of the Schr\"odinger equation by suitable functions $\eta$ and $a$. Peculiarity of \cite{mio7} is that the proposed paths for $\eta$ and $a$ are partially different from the ones considered in \cite{Dmitry} for which the dynamics of the equation \eqref{eq_potentiel} would not be well-posed in general.

Our aim is to obtain the result from \cite{mio7} by considering directly the evolution of the singular equation \eqref{eq_1}. Indeed, it is reasonable to imagine that the same control strategy holds, via the dynamics of \eqref{eq_1}, with a motion of the parameters $\eta$ and $a$ which mimes the one considered in \cite{mio7}. 

\medskip

We address two main difficulties in this work. First, the well-posedness of the Schr\"odinger equation with a Dirac potential supported at a moving point. Such kind of results was already addressed in $\R$ by \cite{Geygel} for instance, but not on bounded intervals as we have here. In this case, the quadratic form corresponding to the Hamiltonian in the Schr\"odinger equation \eqref{eq_1}:
$$
a_t(\psi)=\|\dd_x \psi\|_{L^2}^2+|\eta(t)||\psi(a(t))|^2
$$ can not be $C^2$ in time when $\psi$ is only $H^1_0$. Such hypothesis is usually required in order to apply the common well-posedness results for non-autonomous systems. Our strategy consists in introducing a tricky transformation of the interval $(0,1)$ in itself which fixes the position of the delta. The transformation modifies the equation \eqref{eq_1} in a new equivalent dynamics which is well-posed under suitable hypotheses.

Showing that the permutation of the energy associated to different eigenmodes, stated in \cite[Theorem 1.1]{mio7}, can be performed also via the dynamics of \eqref{eq_1}, with suitable parameters $a$ and $\eta$, is not a difficult issue. In fact, all the intermediate results leading to \cite[Theorem 1.1]{mio7} are still valid for the equation \eqref{eq_1} and then the same strategy can be adopted in our framework. 

Let us give an example on how the permutation considered here works. Assume that \eqref{eq_1} is a free potential Schr\"odinger equation at time $t=0$ ($\eta(0)=0$). We consider an initial data for the dynamics only given by the first two eigenmodes with different energies, i.e. 
$$
\psi^0(x)=c_1^0 \sin(\pi x)+ c_2^0 \sin(2\pi x), \quad c_1^0,c_2^0\in \mathbb{C}, \quad |c_1^0|\neq |c_2^0|. 
$$
We seek for functions $\eta(t)$ and $a(t)$ defined in the time interval $[0,T]$, with $T$ sufficiently large, such that at time $t=T$ the dynamics is again the one associated to the free potential Schr\"odinger equation ($\eta(T)=0$) and, 
$$
\psi(x,T)\sim c_1(T) \sin(\pi x)+ c_2(T) \sin(2\pi x),  
$$
with $|c_1(T)|=|c_2^0|$ and  $|c_2(T)|=|c_1^0|$. To the purpose, we divide the time interval in $5$ subintervals $[T_i,T_{i+1}]$ with $i=0,1,..,4$, $T_0=0$ and $T_5=T$. In the first $[T_0,T_1]$, we choose $a(t)=a_i$ constant, with $a_i>1/2$ and $\eta(t)$ an increasing function from $\eta(0)=0$ to $\eta_M>>1$.
In this interval, we introduce a transition between the free Schr\"odinger equation (i.e. without potential) to a new one with a large Dirac potential supported at $x=a_i$. This time interval $[T_0,T_1]$ must be sufficiently large and the dynamics sufficiently slow to guarantee an adiabatic regime. In the second subinterval $[T_1,T_2]$, we fix $\eta(t)=\eta^0$ and move $a(t)$, the support of the delta, adiabatically from $a(T_1)=a_i$ to $a(T_2)=a_i^\varepsilon=1/2+\varepsilon$ with $\varepsilon <<1$.  Until the time $T_2$, the energy of the first two modes remains the same as in the initial data. However, as $\eta_M$ is large, the solution at time $T_2$ is almost zero at $x=a_i^\varepsilon$ and the first two eigenmodes are localized in $(0,a_i^\varepsilon)$ and $(a_i^\varepsilon,1)$ respectively, i.e.
$$
\varphi_1(T_2,x)\sim\left\{ \begin{array}{ll} \sin(\frac{\pi x}{a_i^\varepsilon}),&x\in(0,a_i^\varepsilon),\\
0,& x\in(a_i^\varepsilon,1),  \end{array}\right.
\quad 
\phi_2(T_2,x)\sim\left\{ \begin{array}{ll} 0,&x\in(0,a_i^\varepsilon),\\
\sin(\frac{\pi (x-a_i^\varepsilon)}{1-a_i^\varepsilon}),& x\in(a_i^\varepsilon,1).  \end{array}\right.
$$  
In $[T_2,T_3]$, we maintain $\eta(t)=\eta_M$ constant and consider $a(t)$ as a linear function from $a_i^\varepsilon$ to $a_f^\varepsilon=1/2-\varepsilon$. This third subinterval must be sufficiently small to guarantee a continuous but non-adiabatic transition. Here, the first two eigenmodes change in such a way that now they are localized in $(a_i^\varepsilon,1)$ and $(0,a_i^\varepsilon)$ respectively, i.e.
$$
\phi_1(T_3,x)\sim\left\{ \begin{array}{ll} 0,&x\in(0,a_f),\\
\sin(\frac{\pi (x-a_f^\varepsilon)}{1-a_f^\varepsilon}),& x\in(a_f^\varepsilon,1),  \end{array}\right.
\quad 
\phi_2(T_3,x)\sim\left\{ \begin{array}{ll} \sin(\frac{\pi x}{a_f^\varepsilon}),&x\in(0,a_f^\varepsilon),\\
0,& x\in(a_f^\varepsilon,1).  \end{array}\right.
$$
Therefore, the energy associated to $\phi_1(T_2,x)$ is transmitted to $\phi_2(T_3,x)$ and the one associated to $\phi_2(T_2,x)$ pass to $\phi_1(T_3,x)$.
In the fourth interval $[T_3,T_4]$, we come back to an adiabatic regime where we change $a(t)$ from $a_f^\varepsilon$ to $a_f<1$. 
Finally, in the last interval $[T_4,T_5]$, we maintain $a(t)=a_f$ and consider $\eta(t)$ a decreasing function from $\eta_M$ to $0$. This requires again a large time interval and sufficiently slow dynamics in order to guarantee an adiabatic process. Note that the whole motion requires an adiabatic regime for slow variations of $\eta(t)$ and $a(t)$, and a continuity result for the non-adabatic transition. 
As described in \cite{mio7}, this strategy can be adapted to any permutation of energies in a finite number of modes by combining several potentials of the form $\eta(t)\delta_{x=a_i(t)}$, $i=1,...,I$ for some trajectories $a_i(t)$.

\medskip

The second difficulty we address in this work comes from the numerical approximation of the system \eqref{eq_1} containing a Dirac measure. We propose a Galerkin approximation in space with an implicit midpoint scheme in time. This Galerkin approximation involves a projection on the finite dimensional subspace generated by the first $N$  eigenfunctions of the Laplace operator in the interval $(0,1)$. As these are not eigenfunctions of the underlying operator, convergence is not straightforward and it requires a careful analysis. This method give us a simple, and easy to implement, scheme.  

However, even if the scheme is convergent when applied to  (\ref{eq_1}), the numerical simulation of the control strategy is not simple as it considers large values of $\eta(t)$ and long time simulations to guarantee the adiabatic regime. Both quantities, $\max_{t\in[0,T]}\eta(t)$ and the final time $T$, affect the error estimate. Accurate approximation of the solutions under these conditions would require a very large dimension of the finite dimensional system obtained by the Galerkin approximation and a extremely small time step. To overcome this difficulty, we observe that the proposed numerical scheme conserves the associated discrete energy and therefore it reproduces the adiabatic regime, even for large time simulations. On the other hand, the time interval where the energy swift occurs is not large and the convergence of the numerical scheme guarantees a correct simulation. Even so, some parameters as $\eta_M$, $\varepsilon$ or the lengths of the time intervals must be estimated numerically.

\medskip

\medskip

{\noindent \bf \underline{Some bibliography}}
\smallskip

The controllability via external fields of the Schr\"odinger equation has been widely studied in literature and many works addressed the problem via bilinear control fields. In other words, they considered the equation \eqref{eq_1} in presence of a bilinear potential $V(t,\cdot)=v(t) \mu(x)$ with $t\in[0,T]\subset\R^+$ instead of $\eta(t) \delta_{x=a(t)}$. Here, the real function $v$ is the control and it plays the role of the time-dependent intensity of an external field. The real function $\mu$ represents the action of the field.

The global approximate controllability of bilinear quantum systems has been proved with different techniques in the last decades. We refer to $\cite{milo,nerse2}$ for Lyapunov techniques and $\cite{ugo,nabile}$ for Lie-Galerking methods. The result was achieved via adiabatic arguments in the works $\cite{ugo2,ugo3}$.

The exact controllability of the bilinear Schr\"odinger equation is in general a more delicate matter. Indeed, it is well known that the equation is not exactly controllable in $L^2((0,1),\C)$ when $v\in L^{r}_{loc}(\R^+,\R)$ with $r>1$ and $\mu$ is sufficiently regular, even though it is well-posed. We refer to the work \cite{ball} by Ball, Mardsen and Slemrod for further details on these two results.

The turning point for this kind of studies was the idea, introduced by Beauchard in $\cite{be1}$, of controlling the equation in suitable subspaces of $L^2((0,1),\C)$. Following this approach, different works addressed the controllability issue on bounded interval as \cite{laurent,mio2,mio1,morgane1,morganerse2}. The problem was also studied on quantum graphs in \cite{mio5,mio6,mio3,mio4} and on the two dimensional disc in \cite{Moyano}.

\medskip

A peculiarity of the control proposed in our work and in \cite{mio7} is the coupling between adiabatic and non-adiabatic motions, which is very unusual for the common results of this type. In addition, the existing techniques for the bilinear controls can not be directly applied for the Schr\"odinger equation in presence of delta-potentials as in \eqref{eq_1}. From this perspective, we provide a new and different way to permute eigenmodes of quantum systems via external control fields.

Another peculiarity of our techniques is the simplicity of the controls. Once the paths for the delta-potentials are designed, it is enough to sufficiently slow down the adiabatic parts of the dynamics and to accelerate the transitions in order to obtain the permutations. We refer to Section \ref{numerical} for further details on how to proceed in some explicit examples.

\medskip

{\noindent \bf \underline{Scheme of the work}}

\smallskip

In Section \ref{sec_well}, we address the well-posedness of \eqref{eq_1} stated in Theorem \ref{well_general}. In Section \ref{permutation}, we show how to adapt the permutations of modes described in \cite{mio7} to our setting. This is in fact straightforward and we only underline the main ideas of the proof by presenting those auxiliary results from \cite{mio7} adapted to our context.
In Section \ref{galerkin}, we introduce the numerical approximation and prove its convergence.
In Section \ref{numerical}, we discuss some numerical simulations where we illustrate the control strategy described above. Finally, in Section \ref{comments} we give some final comments and difficulties to capture other theoretical results in the simulations.

\section{Well-posedness}\label{sec_well}
The aim of this section is to ensure the existence and the unicity of solutions of the the Schr\"odinger equation \eqref{eq_1}. We restrict ourselves to the case of one single Dirac potential for simplicity but the results hold also when considering a finite number of nonintersecting Dirac potentials. 

 To the purpose, we consider the following equivalent system in $L^2((0,1),\C)$
\begin{equation} \label{eq_1_bis}
\left\{
\begin{array}{ll}
i\partial_t \psi=-\partial^2_{xx}\psi, & x\in (0,a(t))\cup(a(t),1), \ \ t>0,\\
\psi(t,0)=\psi(t,1)=0,\\
\psi(t,a(t)^-)=\psi(t,a(t)^+),\\
\dd_x\psi(t,a(t)^+)-\dd_x\psi(t,a(t)^-)=\eta(t) \psi(t,a(t)),\\
\psi(0)=\psi^0\in L^2((0,1),\C).\\
\end{array} 
\right.
\end{equation}
An abstract setting for the dynamics of \eqref{eq_1_bis} is given by the following equation in $L^2((0,1),\C)$ 
$$i\partial_t \psi=A_t\psi,\ \ \ \ \ \  \ \ \  \ \ \ \ \ \  \ \ \  A_t=-\partial^2_{xx},$$
$$D(A_t)=\big\{\psi\in H^2((0,a(t))\cup(a(t),1))\cap H^1_0(0,1)\ :\ \dd_x\psi(a(t)^+)-\dd_x\psi(a(t)^-)=\eta(t)
\psi(a(t))\big\}.$$
Note that $A_t$ is self-adjoint and positive definite. The main result of this section is the following.

\begin{theorem}\label{well_general}
Let $a\in C^3([0,T],(0,1))$ and $\eta\in C^2([0,T],\R^+)$.
Equation \eqref{eq_1_bis} generates a unitary flow in $L^2(0,1)$
 and for any $\psi_0\in D(A_0)$, the corresponding solution of the equation \eqref{eq_1_bis} is
$C^0([0,T];D(A_t))\cap 
C^1([0,T];L^2)$. Finally, for any $\psi_0\in H^1_0$, the equation \eqref{eq_1_bis} admits a solution in
$C^0([0,T];H^1_0)\cap 
C^1([0,T];H^{-1})$.
\end{theorem}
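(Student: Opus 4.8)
\medskip
\noindent\emph{Proof plan.} The strategy is to pin the Dirac mass to a fixed point by a unitary change of variables, thereby turning \eqref{eq_1_bis} into a non-autonomous linear Schr\"odinger equation to which classical theory applies. First I would check that, for each fixed $t$, $A_t$ is exactly the self-adjoint operator associated with the quadratic form $a_t(\psi)=\int_0^1|\partial_x\psi|^2\,dx+\eta(t)|\psi(a(t))|^2$ on $D(a_t)=H^1_0(0,1)$: the trace $\psi\mapsto\psi(a(t))$ is bounded on $H^1_0(0,1)$ with $|\psi(a(t))|^2\le\varepsilon\|\partial_x\psi\|_{L^2}^2+C_\varepsilon\|\psi\|_{L^2}^2$ for every $\varepsilon>0$, so $a_t$ is densely defined, symmetric, closed, and --- since $\eta(t)\ge0$ --- bounded below by $\pi^2$ through the Poincar\'e inequality; the representation theorem then gives a self-adjoint operator $\ge\pi^2 I$, whose domain is identified with $D(A_t)$ by testing the defining identity first against $\varphi\in C_c^\infty((0,a(t))\cup(a(t),1))$ (which forces $H^2$ regularity away from $a(t)$ and $-\partial^2_{xx}\psi=f$ there) and then against arbitrary $\varphi\in H^1_0(0,1)$ (the boundary terms at $x=a(t)$ producing the jump relation). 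This already yields the self-adjointness and positivity stated before the theorem and reduces everything to the Cauchy problem $i\partial_t\psi=A_t\psi$.

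The obstruction to applying the non-autonomous theory directly is that, for a fixed $\psi\in H^1_0(0,1)$, the map $t\mapsto\psi(a(t))$ is only H\"older continuous ($\psi\in H^1\subset C^{0,1/2}$), so $t\mapsto a_t$ is not $C^2$ on the fixed form domain $H^1_0(0,1)$. I would remove the moving evaluation point as follows. Fix $a_0\in(0,1)$ and pick a family of diffeomorphisms $\tau_t:(0,1)\to(0,1)$, smooth in space with $\tau_t'>0$, with $\tau_t(0)=0$, $\tau_t(a_0)=a(t)$, $\tau_t(1)=1$, depending on $t$ with the $C^3$ regularity of $a$; it is important that $\tau_t$ be smooth \emph{across} $a_0$, so as not to reintroduce a $t$-dependent interface condition. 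Set $(U_t\psi)(y)=\sqrt{\tau_t'(y)}\,\psi(\tau_t(y))$, a unitary operator of $L^2(0,1)$ mapping $H^2((0,a(t))\cup(a(t),1))\cap H^1_0$ onto $H^2((0,a_0)\cup(a_0,1))\cap H^1_0$ and $H^1_0$ onto itself, and depending on $t$ strongly continuously on $L^2(0,1)$ with the regularity inherited from $\tau_t$. If $\psi$ solves \eqref{eq_1_bis} then $\phi:=U_t\psi$ solves $i\partial_t\phi=H_t\phi$ with $H_t=U_tA_tU_t^{-1}+i(\partial_tU_t)U_t^{-1}=:S_t+P_t$, where a change of variables in $a_t$ shows $S_t$ is the self-adjoint operator with form $b_t(\phi)=\int_0^1 c(t,y)|\partial_y\phi|^2\,dy+(\text{lower-order terms})+\tilde\eta(t)|\phi(a_0)|^2$ on $H^1_0(0,1)$, with $c(t,\cdot)=\tau_t'(\,\cdot\,)^{-2}\ge c_0>0$ and all coefficients smooth in $y$ and $C^3$ in $t$, $\tilde\eta(t)=\eta(t)\tau_t'(a_0)^{-1}\ge0$ of class $C^2$, while $P_t$ is a symmetric operator of order $\le1$ with coefficients proportional to $\dot a(t)$, hence $C^2$ in $t$. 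The point is that the singular term now sits at the \emph{fixed} point $y=a_0$, so $\phi\mapsto\phi(a_0)$ is a $t$-independent bounded functional on $H^1_0(0,1)$; the form domain of $H_t$ is $H^1_0(0,1)$ for all $t$, and $t\mapsto b_t$, $t\mapsto P_t$ are $C^2$ --- which is exactly why $a\in C^3$ and $\eta\in C^2$ are assumed, one time derivative being consumed in $P_t$.

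Next I would invoke the abstract theory. $H_t$ is self-adjoint ($S_t$ is a unitary conjugate of a self-adjoint operator; $P_t$ is $i$ times a skew-symmetric operator of order $\le1$, hence symmetric and $S_t$-form-bounded with relative bound $0$, so $H_t=S_t+P_t$ is self-adjoint by the KLMN theorem with form domain $H^1_0(0,1)$), and uniformly bounded below on $[0,T]$ (since $S_t\ge\pi^2$ and the $P_t$-bounds are uniform on the compact $[0,T]$); after replacing $H_t$ by $H_t+\lambda$ for a suitable $\lambda\ge0$, which only multiplies solutions by $e^{-i\lambda t}$, we may assume $H_t\ge c>0$. With the $t$-independent form domain $H^1_0(0,1)$ and the $C^2$ time-regularity above, the classical theory of non-autonomous linear Schr\"odinger equations (as used in \cite{mio7} and the references therein) gives a strongly continuous unitary propagator $(t,s)\mapsto\mathcal{G}(t,s)$ on $L^2(0,1)$ such that $\phi:=\mathcal{G}(\cdot,0)\phi_0$ solves the $\phi$-equation with $\phi\in C^0([0,T];D(H_t))\cap C^1([0,T];L^2)$ when $\phi_0\in D(H_0)$, and $\phi\in C^0([0,T];H^1_0)\cap C^1([0,T];H^{-1})$ when $\phi_0\in H^1_0$. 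Finally, undoing the change of variables, $\psi(t):=U_t^{-1}\mathcal{G}(t,0)U_0\psi_0$ solves \eqref{eq_1_bis}; since $U_t^{-1}$ is unitary, carries $D(H_t)$ onto $D(A_t)$ and $H^1_0$ onto itself, and depends on $t$ with the regularity needed for $\partial_t(U_t^{-1}\phi(t))$ and $A_tU_t^{-1}\phi(t)=U_t^{-1}S_t\phi(t)$ to make sense and be continuous in $L^2$, the regularity of $\phi$ transfers to $\psi$ in the stated classes, and unitarity of the flow follows from $\|\psi(t)\|_{L^2}=\|\mathcal{G}(t,0)U_0\psi_0\|_{L^2}=\|\psi_0\|_{L^2}$. (For the $H^1_0$ statement one could also argue by density of $D(A_0)$ in $H^1_0$ together with the a priori bound $\|\psi(t)\|_{H^1_0}^2\lesssim a_t(\psi(t))\le a_0(\psi_0)e^{Ct}$, from $\frac{d}{dt}a_t(\psi(t))=(\partial_t a_t)(\psi(t))\le C\,a_t(\psi(t))$.)

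The main obstacle is the second step: constructing $U_t$ and checking, with exactly the regularity $a\in C^3$, $\eta\in C^2$, that the conjugated family $H_t$ satisfies verbatim the hypotheses of the non-autonomous result --- self-adjointness, a uniform lower bound, a $t$-independent form domain, and $C^2$ dependence on $t$. In particular, $\tau_t$ must be smooth across $a_0$ (otherwise the interface condition for $\phi$ acquires a $t$-dependent coefficient and the form domain moves again), and one must track the regularity budget carefully through the conjugation, since forming $(\partial_tU_t)U_t^{-1}$ costs one derivative of $a$. Everything else --- the quadratic-form analysis of Step~1, the citation of the abstract theorem, and the transport through the unitary $U_t$ --- is routine.
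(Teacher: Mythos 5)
Your plan is correct and follows essentially the same route as the paper: straighten the moving Dirac point by a $C^3$ family of diffeomorphisms, conjugate by the associated unitary $\sqrt{\tau_t'}\,(\cdot\circ\tau_t)$ (the paper's $h^\sharp$), verify that the resulting family of self-adjoint operators has the fixed form domain $H^1_0(0,1)$ with $C^2$ time dependence, and invoke the Kisy\'nski-type non-autonomous theory before transporting back. The only cosmetic difference is that the paper arranges $\partial_x h\equiv 1$ near the interface so the jump coefficient stays exactly $\eta(t)$, whereas you allow a modified but still $C^2$ coefficient $\tilde\eta(t)$; this changes nothing essential.
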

\begin{proof}Let us start by considering $\eta>0$ as a constant function. We discuss how to generalize the result in the final part of the proof.

\noindent
{\bf 1) Preliminairies. } The proof is based on the following idea. Let $I_t=[a(t)-\epsilon,a(t)+\epsilon]$ be a family 
of intervals such that $0<\epsilon<1/2$ is so that $I_t\subset(0,1)$ for every $t$.  We define a family smooth diffeomorphisms $h(t,x)$ which 
is $C^3$ in both the variables $t$ and $x$, so 
that, for every $t\in[0,T]$,
\begin{equation}\begin{split}\label{propiet}
h(t,(0,1))=(0,1),\ \ \ \ \ \ \ \ \ \ \ \ \ h(t,0)=0,\ \ \ \ \ &\ \ \ \ \ \ \ h(t,a(t))=1/2,\ \ \ \ \ \ \ \ \ \ \ \ h(t,1)=1,\\
h\big(t,(0,a(t)-\epsilon)\big)=\Big(0,\frac{1}{2}-\epsilon\Big),  \ \ \ h(t,I_t)=\Big[\frac{1}{2}&-\epsilon,\frac{1}{2}+\epsilon\Big]\  \ \ \ h\big(t,(a(t)+\epsilon,1)\big)=\Big(\frac{1}{2}+\epsilon,1\Big).                                                      \end{split}\end{equation}
Our purpose is to apply the transformation $h$ to the interval $(0,1)$ in order to fix the position of the delta by translating the interval $I_t$ in $\big [\frac{1}{2}-\epsilon,\frac{1}{2}+\epsilon\big]$. A possible choice is to set, for $x\in I_t$, $$h(t,x)=x-a(t) + 1/2.$$ We investigate how to define $h$ outside $I_t$ in order to guarantee \eqref{propiet}. For $x\in (0,a(t))$, we take
\begin{equation*}\begin{split}
h(t,x)=\frac{a(t)-1/2}{(a(t)-\epsilon)^4}  x^4 -  4\frac{a(t)-1/2}{(a(t)-\epsilon)^3}  x^3 + 6\frac{a(t)-1/2}{(a(t)-\epsilon)^2}  x^2 - 4\frac{a(t)-1/2}{a(t)-\epsilon} x+x.
\end{split}\end{equation*}
This is an invertible polynomial in $(0,a(t)-\epsilon)$ such that the continuity of the first three order derivatives in space and in time of $h$ is guaranteed at the point $x=a(t)-\epsilon$. In addition, we have that $h(t,0)=0$ and $h(t,a(t)-\epsilon)=1/2-\epsilon.$
Similarly for $x\in (a(t)+\epsilon,1),$ we can take
\begin{equation*}\begin{split}
                  h(t,x)=&\frac{a(t)-1/2}{(1-a(t)-\epsilon)^4}  (1-x)^4 -  4\frac{a(t)-1/2}{(1-a(t)-\epsilon)^3}(1-  x)^3 + 6\frac{a(t)-1/2}{(1-a(t)-\epsilon)^2} (1- x)^2 \\
                  &- 4\frac{a(t)-1/2}{1-a(t)-\epsilon}(1- x)+x.
                  \end{split}\end{equation*}
                  Clearly, these are not the only possible definitions for the diffeomorphisms $h$ and their expressions are not relevant for the proof. However, we explicit them in order to convince the reader of the existence of such a familiy of diffeomorphisms $h$ verifying the relations \eqref{propiet}.

                  \smallskip

\noindent
{\bf 2) Fixing the domain. }As introduced before, our aim is to fix the position of the delta by using the diffeomorphisms $h$. This transformation modifies the Schr\"odinger equation \eqref{eq_1_bis} as presented in the work \cite{mio9}.  To provide the new formulation of the equation, we introduce the pullback operator  
\begin{equation}\label{pullback}
h^*(t)~:~\psi\in L^2((0,1),\C)~\longmapsto~\psi\circ h=\phi(h(t,\cdot)) \in L^2((0,1),\C)
\end{equation}
and its inverse, the pushforward operator, defined by 
\begin{equation}\label{pushforward}
h_*(t)~:~\phi\in L^2((0,1),\C)~\longmapsto~\psi\circ h^{-1}=\phi(h^{-1}(t,\cdot)) \in L^2((0,1),\C)~.
\end{equation}
By following the theory developed in \cite{mio9}, we denote
\begin{equation}\label{pullback_sharp}
h^\sharp(t)~:~\psi\in L^2((0,1),\C)~\longmapsto~\sqrt{|\dd_x h(t,\cdot)|}\,(\psi\circ h)(t) \in L^2((0,1),\C).
\end{equation}
We also call $h_\sharp(t)$ its inverse 
\begin{equation}\label{pushforward_sharp}
h_\sharp(t)=(h^\sharp(t))^{-1}~:~\phi\mapsto (\phi / \sqrt{|\dd_x h(t,\cdot)|})\circ h^{-1} \in L^2((0,1),\C).
\end{equation}
Notice that the relation $\|h^\sharp(t)u\|_{L^2}=\|u\|_{L^2}$ yields that $h^\sharp(t)$ and 
$h_\sharp(t)$ are isometries and they preserve the Hamiltonian structure of the Schr\"odinger equation through the 
change of variables.  From now on, we omit the time dependence from $h$, $h^\sharp$ and $h_\sharp$ when it is 
not necessary.

We use the family of diffeomorphisms $h$ in 
order to act a change of variable in $(0,1)$ and we use $h^\sharp$ in order to rewrite \eqref{eq_1_bis} in a 
equivalent equation in $L^2(0,1)$ where the internal 
dynamical boundary conditions are now placed in the point $1/2$. In particular,
$$h^\sharp (D(A_t))=\{\phi\in H^2((0,1/2)\cup(1/2,1))\cap H^1_0(0,1)\ :\ 
\dd_x\psi(1/2^+)-\dd_x\psi(1/2^-)=\eta 
\psi(1/2)\}.$$
The jump conditions is only displaced at the point $1/2$ and it does not change after the the transformation 
$h^\sharp$. Indeed, $h$ translates each $I_t$ in $(1/2-\epsilon,1/2+\epsilon)$ and fixes the position $a(t)$ in $1/2$, while $\dd_x h(t,\cdot)$ is 
constantly equal to $1$ in a neighborhood of $1/2$.
If the equation obtained after the transformation $h^\sharp$ admits a unitary propagator $U_t$, then $h_\sharp U_t 
h^\sharp$ is the unitary 
propagator associated to the dynamics of \eqref{eq_1}.

\smallskip

\noindent
{\bf 3) Existence and unicity of solutions of the transformed dynamics.} When $\psi$  is solution of \eqref{eq_1} (at least from a formal sense), $\phi:=h^\sharp \psi$ solves the following equation (see [identity\ (1.8);\ 2] for further details):
\begin{equation}\label{eq_transf}i\partial_t 
\phi(t)=h^\sharp H(t)h_\sharp \phi(t),\ \ \ \  \text{in}\ \ \ \ (0,1)\ \ \ \ \text{with}\end{equation}
$$H(t):=-\Big[\big(\dd_x+i{M_h}\big)\circ\big(\dd_x+iM_h\big)+M_h^2\Big],\ 
\ 
\ 
\ \ \ ~~~M_h(t,x)=-\frac 12 (h_*\partial_t h)(t,x).$$
We denote by $D(h^\sharp H(t)h_\sharp)$ the space $h^\sharp D(A_t)$. We notice that studying $\la\phi_1,h^\sharp H(t)h_\sharp\phi_2 \ra_{L^2}$ for 
every $\phi_1,\phi_2\in h^\sharp D(A_t)$ is equivalent to study $\la\psi_1,H(t)\psi_2 
\ra_{L^2}$ for 
every $\psi_1,\psi_2\in D(A_t)$ and
$$\la\psi_1,H(t)\psi_2 
\ra_{L^2}=\Big\la\psi_1,-\dd_{xx}^2\psi_2 +\frac{i}{2}\dd_x\big(h_*(\dd_t h)\psi_2 ) 
+\frac{i}{2}h_*(\dd_t 
h)\dd_x \psi_2 
\Big\ra_{L^2}.$$
Now, $A_t=-\dd_{xx}^2$ is self-adjoint in $D(A_t)$ and the operator $\frac{i}{2}\dd_x\big(h_*(\dd_t 
h)\cdot) 
+\frac{i}{2}h_*(\dd_t 
h)\dd_x$ is symmetric in such space. As a consequence, $h^\sharp H(t)h_\sharp$ is a family of self-adjoint 
operators in $D(h^\sharp H(t)h_\sharp)$.

%
 In order to ensure the well-posedness of the equation \eqref{eq_transf}, we would like to apply \cite[Theorem A.1]{mio9} which rephrases the theory from the 
work of Kisynsky \cite{Kisy}. For every $\psi\in D(h^\sharp H(t)h_\sharp)$,
\begin{equation}\label{dissipative}\begin{split}q(\psi):=&\la\psi,h^\sharp H(t)h_\sharp\psi 
\ra_{L^2}=\la h_\sharp\psi, H(t)h_\sharp\psi 
\ra_{L^2}=\|\big(\dd_x+i{M_h}\big)(h_\sharp \psi)\|_{L^2}^2+|\eta||(h_\sharp \psi)(a(t))|^2\\
&- \|{M_h}(h_\sharp \psi)\|_{L^2}^2
=\|h^\sharp\big(\dd_x+i{M_h}\big)h_\sharp \psi\|_{L^2}^2+|\eta||\psi(1/2)|^2- \|h^\sharp{M_h}h_\sharp \psi\|_{L^2}^2.\end{split}\end{equation}
The Friedrichs extension of the quadratic form $q(\cdot)$ defined on $C_0^\infty ((0,1),\C)$ is the quadratic form $q(\cdot)$ defined in $H^1_0((0,1),\C)$ and $D(|h^\sharp H(t)h_\sharp|^\frac{1}{2})=H^1_0((0,1),\C)$. The sesquilinear form $\Phi_t(\phi,\psi)=\big\la\phi,h^\sharp H(t)h_\sharp\psi\big\ra$ from $H^1_0$ to $H^{-1}$ has the same regularity of $M_h$ (and then of $\dd_th $), which is $C^2$ in time as soon as $a$ is $C^3$. Finally, there exist $\gamma>0$ and $\kappa\in\R$, depending only on $h$, such that
$$\Phi_t(\psi,\psi)\geq\gamma \|\dd_x \psi\|_{L^2}^2-\kappa\|\psi\|_{L^2},\ \ \ \ \  \ \ \ \ \ \forall \psi\in D(h^\sharp H(t)h_\sharp).$$
The statement is ensured thanks to the results of Kisynsky \cite{Kisy} stated in \cite[Theorem A.1]{mio9}.

\smallskip

\noindent
{\bf 4) Conclusion.} When $\eta$ is not just a constant functions, the well-posedness follows as above. Indeed, if we define the sesquilinear form $\Phi_t(\phi,\psi)$ corresponding to the equation (it is the same of \eqref{dissipative} with $\eta$ depending on time), then we notice that it is $C^2$  as soon as $a$ is $C^3$ and $\eta$ is $C^2$.
\end{proof}

We are finally ready to ensure the well-posedness of the equation \eqref{eq_1} in presence of more moving deltas. To the purpose, we introduce the corresponding abstract setting given by the following equation in $L^2((0,1),\C)$ 
\begin{equation}\label{moredeltas}i\partial_t \psi=\tilde A_t\psi,\end{equation}
where the operator $\tilde A_t=-\partial^2_{xx}$ is defined on the domain
\begin{align*}D(\tilde A_t)=\Big\{&\psi\in H^2\big((0,a_1(t))\cup(a_1(t),a_2(t))\cup...\cup(a_{J-1}(t),a_J(t))\cup(a_J(t),1)\big)\cap H^1_0(0,1)\ :\\
&\dd_x\psi(a_j(t)^+)-\dd_x\psi(a_j(t)^-)=\eta(t)
\psi(a_j(t)),\ \ \ \ \forall j\leq J\Big\}.\end{align*}

\begin{corollary}\label{well_general_coro}
Let $\eta\in C^2([0,T],\R^+)$ and $\{a_j\}_{j\leq J}\subset C^3([0,T],(0,1))$ be a familiy of function which never intersect.
Equation \eqref{moredeltas} generates a unitary flow in $L^2(0,1)$
 and for any $\psi_0\in D(\tilde A_0)$, the corresponding solution is
$C^0([0,T];D(\tilde A_t))\cap 
C^1([0,T];L^2)$. Finally, for any $\psi_0\in H^1_0$, the solution belong to
$C^0([0,T];H^1_0)\cap 
C^1([0,T];H^{-1})$.
\end{corollary}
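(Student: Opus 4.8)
The plan is to reduce Corollary \ref{well_general_coro} to Theorem \ref{well_general} by the same change-of-variables trick, now applied simultaneously to all of the $J$ nonintersecting trajectories $a_1(t),\dots,a_J(t)$. Since the $a_j$ are continuous on the compact interval $[0,T]$ and never intersect, after relabelling we may assume $a_1(t)<a_2(t)<\cdots<a_J(t)$ for all $t$, and there is a uniform gap: there exists $\epsilon\in(0,1/2)$ with $a_{j+1}(t)-a_j(t)>2\epsilon$ for all $t$ and all $j$, and also $a_1(t)>2\epsilon$, $a_J(t)<1-2\epsilon$. Fix $J$ target points $0<b_1<b_2<\cdots<b_J<1$ (for instance $b_j=j/(J+1)$) together with disjoint closed neighbourhoods $[b_j-\epsilon',b_j+\epsilon']\subset(0,1)$.

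First I would construct, exactly as in step 1) of the proof of Theorem \ref{well_general}, a family of diffeomorphisms $h(t,\cdot):(0,1)\to(0,1)$, $C^3$ jointly in $(t,x)$, fixing $0$ and $1$, with $h(t,a_j(t))=b_j$, mapping the interval $I_t^j=[a_j(t)-\epsilon,a_j(t)+\epsilon]$ affinely onto $[b_j-\epsilon,b_j+\epsilon]$ (so that $\dd_x h(t,\cdot)\equiv 1$ near each $a_j(t)$), and interpolating by an explicit quartic polynomial on each of the $J+1$ complementary intervals so that the first three space/time derivatives match at the endpoints $a_j(t)\pm\epsilon$. This is the only genuinely new ingredient and it is an entirely routine generalization of the single-point construction given in the theorem — one just performs the same polynomial patch on each gap, using $a_{j+1}(t)-a_j(t)$ in place of the single spacing. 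The regularity $h\in C^3$ in both variables follows from $a_j\in C^3$ and the explicit formulas.

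Next I would push the equation \eqref{moredeltas} forward by $h^\sharp(t)$ exactly as in steps 2)–3): the isometry $h^\sharp(t)=\sqrt{|\dd_x h(t,\cdot)|}\,(\,\cdot\,)\circ h$ conjugates $\tilde A_t$ into the operator $h^\sharp H(t) h_\sharp$ with the same expression $H(t)=-[(\dd_x+iM_h)^2+M_h^2]$, $M_h=-\tfrac12(h_*\dd_t h)$, now posed on the domain of $H^2$ functions away from the $J$ \emph{fixed} points $b_1,\dots,b_J$ satisfying $\dd_x\psi(b_j^+)-\dd_x\psi(b_j^-)=\eta(t)\psi(b_j)$ for all $j$ (the jump conditions are unchanged at each $b_j$ because $\dd_x h\equiv 1$ near each $a_j(t)$). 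The quadratic form identity \eqref{dissipative} holds verbatim with $|\eta||\psi(1/2)|^2$ replaced by $\sum_{j=1}^J|\eta(t)||\psi(b_j)|^2$; since each point-evaluation term is a compact (indeed bounded-below by $-\kappa_j\|\psi\|_{L^2}\|\dd_x\psi\|_{L^2}$, absorbable) perturbation of $\|\dd_x\psi\|_{L^2}^2$, the form is closed on $H^1_0((0,1),\C)=D(|h^\sharp H(t)h_\sharp|^{1/2})$, is $C^2$ in $t$ provided $a_j\in C^3$ and $\eta\in C^2$, and satisfies the Gårding-type bound $\Phi_t(\psi,\psi)\ge\gamma\|\dd_x\psi\|_{L^2}^2-\kappa\|\psi\|_{L^2}$ uniformly in $t$. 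Then \cite[Theorem A.1]{mio9} (Kisynski's theory) applies and yields a unitary propagator $U_t$ for the transformed equation with the stated $C^0([0,T];D(\cdot))\cap C^1([0,T];L^2)$ regularity on the domain and the $C^0([0,T];H^1_0)\cap C^1([0,T];H^{-1})$ regularity for merely $H^1_0$ data; conjugating back, $h_\sharp(t)U_t h^\sharp(0)$ is the unitary flow for \eqref{moredeltas}.

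The main obstacle, such as it is, is purely the bookkeeping of the multi-point diffeomorphism and checking that the uniform non-intersection hypothesis really does give a time-uniform separation $\epsilon$ — this uses compactness of $[0,T]$ and continuity of the finitely many differences $a_{j+1}-a_j$, $a_1$, $1-a_J$. Once that is in place, every analytic estimate is identical to the single-delta case because the transformed operator and its form differ from those in Theorem \ref{well_general} only by the replacement of one point-mass term $|\eta||\psi(1/2)|^2$ with the finite sum $\sum_j|\eta(t)||\psi(b_j)|^2$, and finite sums of form-bounded perturbations with relative bound $0$ are again form-bounded with relative bound $0$. I would therefore present the proof as: (i) fix the uniform gap and target points; (ii) quote the multi-interval analogue of the construction in step 1) of Theorem \ref{well_general}; (iii) note that steps 2)–4) of that proof go through verbatim with the sum of delta terms; (iv) conclude via \cite[Theorem A.1]{mio9}.
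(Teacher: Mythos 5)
Your proposal is correct and follows essentially the same route as the paper: the authors likewise reduce to Theorem \ref{well_general} by constructing a single family of diffeomorphisms of $(0,1)$ that simultaneously fixes all $J$ delta positions (possible precisely because the $a_j$ never intersect) and then rerunning the conjugation and Kisy\'{n}ski form argument. Your version merely supplies details the paper leaves implicit, such as the compactness argument for a uniform gap and the replacement of the single point-evaluation term by the finite sum $\sum_j \eta(t)|\psi(b_j)|^2$ in the quadratic form.
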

\begin{proof}
Since the functions $a_j$ never intersect, there exists a smooth diffeomorphisms $h$ from $(0,1)$ to himself which fixes the positions of all the delta-potentials at the same time. The result is proved by using techniques leading to Theorem \ref{th} by considering this diffeomorphism $h$ .
\end{proof}

\section{Permutation of the eigenmodes}\label{permutation}

In this section, we prove the following result that states that we can permute a finite number of eigenmodes of the Dirichlet Laplacian on $(0,1)$ by a quasi-adiabatic motion of several delta-potentials in $\eqref{eq_1}$, i.e. a potential of the form $\eta(t)\sum_{j=1}^J \delta_{x=a_j(t)}$, for some $J\geq 1$. 

\begin{theorem}\label{th}
Consider $M\in \mathbb{N}^*$ and let $\sigma:\{ 1,...,M\}\to \{ 1,...,M\}$ be a permutation affecting the first $M$ integers. There exist $T>0$ (sufficiently large), $\varepsilon,\kappa>0$  and
\begin{itemize}
\item $\eta\in C^\infty([0,T],\R^+)$ with $\|\eta'\|_{L^\infty([0,T],\R)}\leq \kappa$ and $\eta(0)=\eta(T)=0$,
\item $a_j\in C^\infty ([0,T],(0,1))$ with $\|a'\|_{L^\infty([0,T],\R)}\leq \kappa$, for $j=1,...,M-1$, 
\end{itemize}
such that the evolution defined by the linear Schr\"odinger equation \eqref{eq_1}, in presence of the potential $\eta(t)\sum_{j=1}^J \delta_{x=a_j(t)}$, realizes the quasi-adiabatic 
permutation $\sigma$. Namely, let $\Gamma_{s}^{t}$ be the unitary propagator generated in the time 
interval $[s,t]\subset[0,T]$ by such equation. For all 
$k\leq M$, there exist $\alpha_k\in\C$ with 
$|\alpha_k|=1$ such that 
$$\big\|~\Gamma_{0}^{T}\sin(k\pi x)\,-\,\alpha_k\sin(\sigma(k)\pi x)~\big\|_{L^2}\leq \varepsilon~. $$
\end{theorem}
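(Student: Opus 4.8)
The plan is to reduce Theorem \ref{th} to the corresponding statement proved in \cite[Theorem 1.1]{mio7} for the smooth-potential equation \eqref{eq_potentiel}, by showing that every intermediate step in that argument survives verbatim when the smooth bump $\eta(t)\rho^{\eta(t)}(x-a(t))$ is replaced by the genuine Dirac comb $\eta(t)\sum_{j}\delta_{x=a_j(t)}$, the well-posedness of the latter being now guaranteed by Corollary \ref{well_general_coro}. Concretely, I would first treat the case $M=2$ (a single transposition realized with one moving delta, $J=1$) following the five-interval construction sketched in the Introduction, and then obtain a general permutation $\sigma$ by decomposing it into transpositions of adjacent modes and combining several delta-potentials $\eta(t)\delta_{x=a_j(t)}$, $j=1,\dots,M-1$, exactly as in \cite{mio7}.

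The heart of the argument has two ingredients, mirroring the two regimes described in the Introduction. \emph{(a) Adiabatic transfer.} On the intervals $[T_0,T_1]$, $[T_1,T_2]$, $[T_3,T_4]$, $[T_4,T_5]$ one invokes the adiabatic theorem for the family of self-adjoint operators $A_t$ (or $\tilde A_t$) defined in Section \ref{sec_well}: since $A_t$ depends smoothly on $(\eta(t),a_j(t))$ and has discrete, simple spectrum with uniform spectral gaps along the chosen path, slowing the motion (i.e. taking $\kappa$ small, equivalently $T$ large) makes $\Gamma_s^t$ follow each eigenprojector up to $O(\kappa)$ error; this is where the hypotheses $\|\eta'\|_{L^\infty}\le\kappa$, $\|a_j'\|_{L^\infty}\le\kappa$ and "$T$ sufficiently large" enter. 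One must check that along the path the relevant eigenvalue branches of $-\partial_{xx}^2$ with the jump condition $\partial_x\psi(a^+)-\partial_x\psi(a^-)=\eta\psi(a)$ do not cross — precisely the spectral picture from \cite{Dmitry} — so that the adiabatic following is well-defined. \emph{(b) Non-adiabatic crossing.} On the short interval $[T_2,T_3]$, with $\eta=\eta_M\gg1$ fixed and $a(t)$ sweeping linearly from $1/2+\varepsilon$ to $1/2-\varepsilon$, one uses instead a continuity/stability estimate: the propagator $\Gamma_{T_2}^{T_3}$ depends continuously on the data, and for $\eta_M$ large the two lowest modes are (up to $O(1/\eta_M)+O(\varepsilon)$) supported on the two subintervals $(0,a(t))$, $(a(t),1)$, so that passing $a(t)$ through $1/2$ swaps which Dirichlet half-interval mode each branch limits to. Quantitatively this is the lemma from \cite{mio7} comparing the true dynamics on $[T_2,T_3]$ with the decoupled pair of Dirichlet problems on the left and right; I would restate it here for \eqref{eq_1_bis} and note its proof is unchanged because the $h^\sharp$-conjugated operator in \eqref{eq_transf} only differs from $-\partial_{xx}^2$ by a first-order term that is $C^2$ in time and uniformly bounded along the path.

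Assembling these, one writes $\Gamma_0^T = \Gamma_{T_4}^{T_5}\,\Gamma_{T_3}^{T_4}\,\Gamma_{T_2}^{T_3}\,\Gamma_{T_1}^{T_2}\,\Gamma_{T_0}^{T_1}$, applies (a) to the four adiabatic factors and (b) to the middle factor, and tracks the first $M$ modes through the composition, collecting the accumulated dynamical/geometric phases into the unimodular constants $\alpha_k$; a triangle-inequality bookkeeping gives the final $\varepsilon$ once $\kappa$ is small, $T$ large, and the auxiliary parameters $\eta_M$ large and the length of $[T_2,T_3]$ small have been fixed in the right order (first $\varepsilon$, then $\eta_M$, then the interval lengths, then $\kappa$). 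For general $\sigma$ one concatenates $O(M)$ such blocks using $M-1$ moving deltas, each block realizing one adjacent transposition while leaving the other modes adiabatically fixed, and chooses a global $\kappa$ small enough for the finitely many blocks simultaneously.

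The main obstacle is not the combinatorial assembly but the \textbf{uniformity of the spectral gap and the smoothness of the eigenbranches along the path}, together with making the adiabatic theorem applicable to the \emph{time-dependent domain} operators $A_t$: the domains $D(A_t)$ move with $a(t)$, so one really works with the $h^\sharp$-conjugated family from Section \ref{sec_well}, whose form domain $H^1_0$ is fixed, and one needs the adiabatic theorem in the form for families of quadratic forms that are $C^2$ in time (as established via \cite{Kisy,mio9}). Verifying the no-crossing / gap condition on the explicit path — especially near $\eta=\eta_M\gg1$ and $a\approx1/2$, where two eigenvalues become exponentially close — is the delicate quantitative point, and it is exactly there that the interplay between "adiabatic everywhere except a short fast window" is needed; I would import this estimate from \cite{mio7,Dmitry} rather than reprove it, since, as the authors note, all the intermediate results of \cite{mio7} transfer to \eqref{eq_1}.
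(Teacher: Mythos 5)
Your proposal follows essentially the same route as the paper: both reduce the statement to the argument of \cite{mio7} via the two key ingredients the paper isolates, namely the adiabatic theorem for the moving-domain family $A_t$ obtained through the $h^\sharp$-conjugation of Section \ref{sec_well} (Theorem \ref{th_adiabatic}) and the continuity estimate for the fast non-adiabatic sweeps (Lemma \ref{lemma-compare}, i.e.\ \cite[Lemma 4.1]{mio7}), assembled by the same wall-motion strategy in which the subinterval lengths are reordered according to $\sigma$ with a transition each time two lengths coincide. Your ordering of the parameter choices ($\varepsilon$, then $\eta_M$, then the interval lengths, then $\kappa$) and your identification of the gap/no-crossing condition as the delicate point imported from \cite{mio7,Dmitry} match what the paper itself defers to that reference, so there is no substantive divergence.
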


Theorem \ref{th} corresponds to \cite[Proposition 6.1]{mio7} when we consider the equation \eqref{eq_1} instead of \cite[equation\ (SE)]{mio7} with \cite[potential\ (6.3)]{mio7}. Its proof follows by applying the same method leading to \cite[Proposition 6.1]{mio7} and it combines three main ideas: 
\begin{enumerate}
\item the adiabatic process when $\eta(t)$ and $a(t)$ change sufficiently slow;
\item a continuity result: for a sufficiently short time interval $[t_1,t_2]$, a constant value for $\eta(t)=\eta_M$, and some suitable functions $a_j(t)$, $j=1,...,M-1$, the spatial distribution of the energy of the solutions remains almost the same in $[t_1,t_2]$. 
\item the strategy to choose $\eta(t)$ and $a_j(t)$, $j=1,...,M-1$, that performs the prescribed permutation.  
\end{enumerate}
 
We refer to \cite{mio7} for a detailed description of the process. Here, we state the key results related with the first two previous ideas (reformulated in the context of equation \eqref{eq_1}) and the strategy that we follow in the numerical experiments below. Following \cite{mio7},  the completion of the proof with these results is straightforward.

\subsection{The adiabatic regime}

We start with the adiabatic result. Let $a\in C^3((0,T),(0,1))$ and $\eta\in C^2((0,T),\R^+)$.
It is easy to check that for every $t\in (0,T)$ the spectrum of the operator $H(t)+\eta(t) \delta_{x=a(t)}$ is composed by 
simple and isolated eigenvalues. We denote by $\lambda(t)$ one of them, $\phi(t)$ is the corresponding normalized eigenfunction 
and $P(t)$ is associated to a spectral projector. Now, $\lambda(t)$, $\phi(t)$ and $P(t)$ continuously depend on 
$t$. Following the classical adiabatic principle, we expect that a sufficiently slow dynamics of \eqref{eq_1} 
starting from a quantum state close to $\phi(0)$ stay close to $\phi(t)$ up to a phase shift. The slowness of the 
dynamics is represented by a parameter $\epsilon>0$ which is considered between the times $t=0$ and 
$t=T/\epsilon$. We rewrite \eqref{eq_1} in the following Schr\"odinger equation
\begin{equation} \label{eq_1_adiabatic}
\left\{
\begin{array}{ll}
i\partial_t \psi_\epsilon=-\partial^2_{xx}\psi_\epsilon +\eta(\epsilon t) \delta_{x=a(\epsilon t)}\psi_\epsilon, & 
x\in (0,1), \; t\in (0,T/\epsilon),\\
\psi_\epsilon(t,0)=\psi_\epsilon(t,1)=0, & t>0,\\
\psi_\epsilon(0,x)=\psi^0(x), & x\in(0,1),
\end{array} 
\right.
\end{equation}

\begin{theorem}\label{th_adiabatic}
Consider the above framework. The following convergence is uniform in $t\in (0,T)$
$$\la P(t)\psi_\epsilon(t/\epsilon)|\psi_\epsilon(t/\epsilon)\ra_{L^2} ~~\xrightarrow[~~\epsilon\longrightarrow 
0~~]{} ~~\la P(0)\psi^0|\psi^0\ra_{L^2}~.$$
\end{theorem}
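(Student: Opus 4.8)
The plan is to reduce the statement to the standard adiabatic theorem for a time-dependent self-adjoint Hamiltonian with a gap condition, and then to verify that the family $H(t)+\eta(t)\delta_{x=a(t)}$ — more precisely its conjugate $\tilde H(t):=h^\sharp H(t) h_\sharp$ on the fixed domain produced in the proof of Theorem~\ref{well_general} — satisfies the regularity hypotheses needed. First I would use the change of variables of Section~\ref{sec_well}: after applying $h^\sharp$, the propagator of \eqref{eq_1_adiabatic} is unitarily equivalent to the propagator of $i\partial_t\phi_\epsilon = \tilde H(\epsilon t)\phi_\epsilon$ on $L^2(0,1)$, and the spectral projector $P(t)$ is conjugated to $\tilde P(t):=h^\sharp(t) P(t) h_\sharp(t)$, which is the Riesz projector of $\tilde H(t)$ on the (simple, isolated) eigenvalue $\lambda(t)$. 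Because $h^\sharp$ is an isometry, $\langle P(t)\psi_\epsilon(t/\epsilon)\mid\psi_\epsilon(t/\epsilon)\rangle = \langle \tilde P(t)\phi_\epsilon(t/\epsilon)\mid\phi_\epsilon(t/\epsilon)\rangle$, so it suffices to prove the convergence for the conjugated problem.

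Next I would check the spectral hypotheses. The eigenvalues of $H(t)+\eta(t)\delta_{x=a(t)}$ are simple and isolated — this is the classical computation for a point interaction on an interval: matching $\sin$-type solutions on $(0,a(t))$ and $(a(t),1)$ subject to the jump condition $\partial_x\psi(a^+)-\partial_x\psi(a^-)=\eta\psi(a)$ yields a transcendental equation whose roots are simple and separate to $+\infty$; since $\eta\geq 0$ one also has a uniform positive lower bound and a uniform spectral gap around each fixed $\lambda(t)$ on the compact interval $[0,T]$. Then I would establish that $t\mapsto \tilde H(t)$ is $C^2$ in the sense required by the adiabatic theorem: the form $\Phi_t$ is $C^2$ in $t$ (shown in the proof of Theorem~\ref{well_general} under $a\in C^3$, $\eta\in C^2$), the domain $D(\tilde H(t))=h^\sharp(t) D(A_t)$ is $t$-independent after conjugation by a further bounded smooth family (the jump condition at $1/2$ does not move), and the resolvent $t\mapsto(\tilde H(t)-z)^{-1}$, hence the Riesz projector $\tilde P(t)=\frac{1}{2\pi i}\oint (\tilde H(t)-z)^{-1}\,dz$, is $C^2$ with values in $\mathcal L(L^2)$ (even in $\mathcal L(H^{-1},H^1_0)$). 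Combined with the uniform gap, these are exactly the ingredients of, e.g., Kato's adiabatic theorem.

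With this in hand I would invoke the adiabatic theorem in the form: if $W_\epsilon(t,s)$ denotes the propagator of $i\partial_t\phi = \tilde H(\epsilon t)\phi$, then $\sup_{t\in[0,T]}\big\| W_\epsilon(t/\epsilon,0)\,\tilde P(0) - \tilde P(t)\, W_\epsilon(t/\epsilon,0)\big\|_{\mathcal L(L^2)}\to 0$ as $\epsilon\to 0$ (intertwining of the flow with the spectral family). Writing $\phi_\epsilon(t/\epsilon)=W_\epsilon(t/\epsilon,0)\phi^0$ with $\phi^0=h^\sharp(0)\psi^0$, one then expands
\begin{align*}
\langle \tilde P(t)\phi_\epsilon(t/\epsilon)\mid\phi_\epsilon(t/\epsilon)\rangle
&=\langle \tilde P(t) W_\epsilon(t/\epsilon,0)\phi^0\mid W_\epsilon(t/\epsilon,0)\phi^0\rangle\\
&=\langle W_\epsilon(t/\epsilon,0)\tilde P(0)\phi^0\mid W_\epsilon(t/\epsilon,0)\phi^0\rangle + o(1)\\
&=\langle \tilde P(0)\phi^0\mid\phi^0\rangle + o(1),
\end{align*}
uniformly in $t$, where the $o(1)$ comes from the intertwining estimate (and the unitarity of $W_\epsilon$), and the last equality uses that $W_\epsilon(t/\epsilon,0)$ is unitary together with $\tilde P(0)^2=\tilde P(0)=\tilde P(0)^*$. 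Undoing the conjugation gives $\langle P(0)\psi^0\mid\psi^0\rangle$, which is the claim.

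The main obstacle I anticipate is not the abstract adiabatic argument — that is classical once the gap condition holds — but the verification of the smoothness of $t\mapsto\tilde H(t)$ and $t\mapsto\tilde P(t)$ in an operator topology strong enough for the adiabatic theorem, given that the underlying operator has a singular (point-interaction) boundary condition and a $t$-dependent form domain only tamed after the diffeomorphism $h$. Concretely one has to argue that conjugation by $h^\sharp(\epsilon t)$ turns the moving jump point into a fixed one and preserves $C^2$-dependence of the form, and that the resulting first-order perturbation terms $M_h$, $\partial_x(h_*\partial_t h\,\cdot)$ stay relatively form-bounded with $t$-uniform constants; this is where the hypotheses $a\in C^3$, $\eta\in C^2$ are consumed, exactly as in the proof of Theorem~\ref{well_general}. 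Once that bookkeeping is done, the rest is a direct application of the adiabatic theorem and the isometry property of $h^\sharp$.
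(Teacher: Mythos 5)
Your proposal is correct and follows essentially the same route as the paper: conjugate by the diffeomorphism $h^\sharp$ from the proof of Theorem~\ref{well_general} to obtain a family of self-adjoint operators with smoothly time-dependent quadratic forms on a fixed domain, check the gap/simplicity of the eigenvalues, and then invoke the classical adiabatic theorem (the paper cites Nenciu and Teufel where you cite the Kato-type intertwining estimate, but this is the same statement). The extra bookkeeping you supply — the unitarity argument transferring the inner product, and the expansion via the intertwining relation — is exactly what the paper leaves implicit.
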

\begin{proof}
 Due to Theorem \ref{well_general}, we know how to define solutions of \eqref{eq_1_adiabatic} by rewriting the system in the form of \eqref{eq_1_bis}. Now, if we apply the family of diffeomorphisms $h$ introduced in the proof of Theorem 
\ref{well_general}, then we obtain a family of self-adjoint operators associated to a family of quadratic forms smooth in time (as showed in the proof of the mentioned theorem). In this framework, the results of Nenciu \cite[Remarks;\ p.\ 16;\ (4)]{Nen} are valid (see also Teufel \cite[Theorem\ 4.15]{Teufel} or the works \cite{ASY, Ga, JoPf}) and the adiabatic theorem is ensured such as in \cite[Section 5]{mio9}.\qedhere
\end{proof}

\subsection{Continuity result}\label{section_fast_translation} 

In this subsection, we state the second main ingredient to prove Theorem \ref{th}, which corresponds to \cite[Lemma 4.1]{mio7}. 

\begin{lemma}\label{lemma-compare}
Let $\psi(t)$ be the solution of the Schr\"odinger 
equation \eqref{eq_1} in a time interval $ [t_1,t_2]$ with $L^2((0,1),\C)$ initial data and $\eta>0$ constant. 
Take 
any function $f\in C^1([t_1,t_2],H^2((0,1),\C)\cap H^1_0((0,1),\C))$
such that $f(t,x)$ vanishes in $a(t)$ for every $t\in[t_1,t_2]$. Then, for all $t\in [t_1,t_2]$,
\begin{equation}\label{eq-bootstrap-intermediate}
\|\psi(t)-f(t)\|_{L^2}^2 \leq \|\psi(t_1)-f(t_1)\|_{L^2}^2 + C(t_2-t_1),
\end{equation}
with $C$ independent of the choice of functions $\eta$ and $a$, and given by
$$C=2\sup_{t\in [t_1,t_2]} \big(\,\|\psi(t_1)\|_{L^2} \|\partial_{xx}^2 f(t)\|_{L^2} + 
(\|f(t)\|_{L^2}+\|\psi(t_1)\|_{L^2})\|\partial_t f(t)\|_{L^2}\,\big).$$
\end{lemma}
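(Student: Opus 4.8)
The plan is to prove the inequality by differentiating the quantity $\|\psi(t)-f(t)\|_{L^2}^2$ in time and estimating the resulting expression. First I would write
\[
\frac{d}{dt}\|\psi(t)-f(t)\|_{L^2}^2 = 2\,\mathrm{Re}\,\big\la \psi(t)-f(t),\ \dd_t\psi(t)-\dd_t f(t)\big\ra_{L^2},
\]
and then substitute the equation satisfied by $\psi$. The key observation is that, although $\psi$ only solves \eqref{eq_1} in a weak sense, the formulation \eqref{eq_1_bis} makes the jump condition explicit, so that for a test function $g\in H^2((0,a(t))\cup(a(t),1))\cap H^1_0$ one has, after integrating by parts on each subinterval,
\[
\big\la g,\, i\dd_t\psi\big\ra_{L^2} = \big\la g,\, -\dd_{xx}^2\psi\big\ra_{L^2}
= \big\la -\dd_{xx}^2 g,\, \psi\big\ra_{L^2} + \eta\,\overline{g(a(t))}\,\psi(t,a(t)),
\]
where the boundary terms at $0$ and $1$ vanish by the Dirichlet conditions and the interior boundary terms at $a(t)^{\pm}$ combine into the single jump contribution $\eta\,\overline{g(a(t))}\,\psi(t,a(t))$ (this is precisely where the self-adjointness of $A_t$ computed in the proof of Theorem \ref{well_general} enters). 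I would apply this twice, with $g=\psi(t)$ (trivially) and with $g=f(t)$, using crucially the hypothesis $f(t,a(t))=0$ so that the jump term disappears when $g=f$.

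The next step is bookkeeping. Writing $i\dd_t\psi = -\dd_{xx}^2\psi$ and combining with $\dd_t f$, the time derivative of $\|\psi-f\|_{L^2}^2$ becomes
\[
2\,\mathrm{Re}\,\Big\la \psi-f,\ i\,\dd_{xx}^2\psi - \dd_t f\Big\ra_{L^2}
= 2\,\mathrm{Re}\Big( i\big\la \psi-f, \dd_{xx}^2\psi\big\ra - \big\la \psi-f,\dd_t f\big\ra\Big).
\]
For the first term I would move the Laplacian off $\psi$: $\la \psi-f,\dd_{xx}^2\psi\ra = \la \dd_{xx}^2(\psi-f),\psi\ra + (\text{boundary terms})$, and the boundary terms vanish because $f(t,a(t))=0$ kills the jump and both $\psi,f\in H^1_0$ kill the endpoints; then $\la \dd_{xx}^2\psi,\psi\ra$ is real, hence $\mathrm{Re}(i\la\dd_{xx}^2\psi,\psi\ra)=0$, leaving only $\mathrm{Re}(i\la -\dd_{xx}^2 f,\psi\ra) = \mathrm{Re}(i\la -\dd_{xx}^2 f,\psi - f\ra)$ up to a term $\mathrm{Re}(i\la\dd_{xx}^2 f, f\ra)$ which is purely imaginary and drops. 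So the whole derivative is controlled by
\[
\frac{d}{dt}\|\psi-f\|_{L^2}^2 \ \le\ 2\|\dd_{xx}^2 f(t)\|_{L^2}\,\|\psi(t)\|_{L^2} \;+\; 2\big(\|f(t)\|_{L^2}+\|\psi(t)\|_{L^2}\big)\|\dd_t f(t)\|_{L^2},
\]
after a Cauchy--Schwarz on each piece (note $\|\psi-f\|\le\|\psi\|+\|f\|$). Finally I would use that the flow is unitary, so $\|\psi(t)\|_{L^2}=\|\psi(t_1)\|_{L^2}$ for all $t$, replace the right-hand side by its supremum over $[t_1,t_2]$ (which is exactly $C$), and integrate from $t_1$ to $t$ to obtain \eqref{eq-bootstrap-intermediate}.

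The main obstacle is rigor of the integration by parts: a priori $\psi(t)$ is merely $L^2$, so $\dd_{xx}^2\psi$ and the pointwise value $\psi(t,a(t))$ do not make classical sense, and the formal computation above must be justified. The clean way is a density/approximation argument: first prove the estimate for $\psi(t_1)\in D(A_{t_1})$, for which Theorem \ref{well_general} guarantees $\psi\in C^0([t_1,t_2];D(A_t))\cap C^1([t_1,t_2];L^2)$ so every manipulation is legitimate (and the trace $\psi(t,a(t))$ is well-defined by Sobolev embedding $H^2\hookrightarrow C^0$ on each subinterval, with $\psi(t,a(t)^-)=\psi(t,a(t)^+)$ by the continuity condition in \eqref{eq_1_bis}); then pass to general $L^2$ initial data by density of $D(A_{t_1})$ in $L^2$, using that both sides of \eqref{eq-bootstrap-intermediate} are continuous with respect to the $L^2$ norm of $\psi(t_1)$ (the left side via unitarity, the right side since $C$ depends on $\psi(t_1)$ only through $\|\psi(t_1)\|_{L^2}$). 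A secondary point worth a line is checking that $t\mapsto \la\psi-f,\dd_t f\ra$ and the other pairings are genuinely differentiable/continuous on $[t_1,t_2]$, which follows from $f\in C^1([t_1,t_2];H^2\cap H^1_0)$ together with the regularity of $\psi$ just invoked; alternatively one can avoid pointwise differentiation altogether and argue directly with the Duhamel/weak formulation, integrating in time from the start.
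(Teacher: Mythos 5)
Your argument is correct and is essentially the proof the paper has in mind: the paper itself only says that the proof follows as the one of \cite[Lemma 4.1]{mio7}, and that proof is precisely this energy estimate (differentiate $\|\psi(t)-f(t)\|_{L^2}^2$, substitute the equation, use that $f$ vanishes where the potential acts so the $\eta$-dependent terms drop, then Cauchy--Schwarz, unitarity of the flow, and integration in time). You additionally supply the two details specific to the Dirac setting that the paper leaves implicit --- namely that $f(t,a(t))=0$ makes $\psi-f$ satisfy the jump condition of $D(A_t)$ so the interior boundary terms cancel in the integration by parts, and the density argument needed to pass from $D(A_{t_1})$ to merely $L^2$ initial data --- which is exactly the right way to adapt the smooth-potential proof of \cite{mio7} to equation \eqref{eq_1_bis}.
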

\begin{proof}
The proof follows as the one of \cite[Lemma 4.1]{mio7} .\qedhere
\end{proof}

\subsection{The control strategy} \label{sec_cont_stra}

As described in the introduction for the permutation of the first two modes, the general process has several steps in different consecutive time intervals. The number of intervals will depend on the complexity of the permutation. We refer to the numerical examples below for further examples. In the first step, we increase adiabatically the $M-1$ Diracs by growing the value of $\eta$ from $0$ to a large $\eta_M=\eta(T_1)$. According to Theorem \ref{th_adiabatic}, this is possible as long as we consider a slow variation of $\eta$ and therefore a very long time. The supports of the Diracs are fixed in this time interval and we call them $$0=b_0^s<b_1^s<...<b_{M-1}^s<1=b_M^s.$$ We choose this values in such a way that the lengths of the subientervals $I_j^s =[b_j^s,b_{j+1}^s]$,  given by $l_j^s=b_{j+1}^s-b_j^s$, $j=0,...,N-1$, are in decreasing order but all of them are larger than $l_1^s/2$, i.e.
$$
0>l_1^s>l_2^s>...>l_{M-1}^s>l_1^s/2.
$$
After this time step $[0,T_1]$, the dynamics will be close to the one associated to a split domain or a domain with $M-1$ internal zero Dirichlet boundary conditions at $x=b_j^s$, $j=1,...,M-1$. In this limit problem, the eigenvalues $\lambda_j(T_1)$ are the union of the eigenvalues of each one of the interval $I_j$ and, by our choice, they are ordered in such a way that
$\lambda_j(T_1)$ is the first eigenvalue of the Dirichlet Laplace operator in $ I_j^s$ when  $j=0,...,M-1$. 

So far the process has been adiabatic and the $j-$mode will be almost the first mode of the interval $I_j^s$, extended by zero to $x\in(0,1)$, when  $j=0,...,M-1$. In particular, the energy associated to the $j$-mode will be concentrated in $I_j^s$. 

In the next time interval $[T_1,T_2]$, we fix the function $\eta(t)=\eta_M$ and move the support of the Diracs from $a_j(T_1)=b_j^s$ to $a_j(T_2)=b_j^f$ in such a way that the length of the subintervals $I_j^f =[b_j^f,b_{j+1}^f]$,  given by $l_j^f=b_{j+1}^f-b_j^f$, $j=0,...,N-1$, are reordered according to the prescribed permutation, i.e.
$$
0>l_{\sigma(1)}^f>l_{\sigma(2)}^f>...>l_{\sigma(M-1)}^f>l_{\sigma(1)}^f/2.
$$ 
The movement must be adiabatic, and therefore $|a_j'|$ must be small, up to some times $t_1<t_2<...<t_N$ where fast non-adiabatic transitions are required. The transitions occur at a time $t_r$ when two different intervals $I_j(t_r)=[a_j(t_r), a_{j+1}(t_r)]$ and $I_k(t_r)=[a_k(t_r), a_{k+1}(t_r)]$ have the same length $l_{j}(t_r)=l_k(t_r)$. According to Lemma \ref{lemma-compare}, the distribution of the energy between the different subintervals is maintained before and after the transition time $t=t_r$.   

The process in $[T_1,T_2]$ requires a detailed analysis of the trajectories $a_j(t)$ of the Diracs to capture all the transitions. In practice, we move sequentially each one of the Diracs in different time subintervals. In this way, transitions are easier to capture. The number of subintervals where we have to change from adiabatic to transition depends on the complexity of the permutation.   

In the final step $[T_2,T]$, the supports of the Diracs are fixed again and we  consider $\eta(t)$ slowly decreasing from $\eta(T_2)=\eta_M$ to $\eta(T)=0$. The adiabatic regime ensures that the permutation introduced in the previous time interval is conserved.

\section{Numerical approximation}\label{galerkin}

In this section, we propose a numerical method to approximate the dynamics of the Schr\"odinger equation \eqref{eq_1_bis} and in particular to simulate the permutation of the energy modes described above. We divide the section in four more subsections where we introduce a Galerkin approximation in the space variable, prove the convergence of the method, write an equivalent matrix formulation and propose a second order time scheme for the approximation of the semidiscrete problem.

\subsection{Galerkin approximation}

We consider the operator $A_t$ defined in Section \ref{sec_well} and its eigenpairs denoted by $(\lambda_k(t),\phi_k(x,t))$ where  $\{ \phi_k(x,t)\}_{k\in\N^*}$ is a Hilbert basis of $L^2(0,1)$ for every $t>0$. We call $$\{ w_k(x)\}_{k\in \mathbb{N}^*}$$ another Hilbert basis of $L^2(0,1)$ composed by some eigenfunctions of the Dirichlet Laplacian in $(0,1)$ and $\{\nu_k\}_{k\in \mathbb{N}^*} $ the associated eigenvalues. In our case, 
$$
w_k(x)=\sqrt{2}\sin(k\pi x), \qquad \nu_k=k^2\pi^2, \quad  k\geq 1.
$$
Consider $X_N\subset X$ the finite dimensional space generated by $\{ w_k\}_{k=1}^N$ and the usual projection 
$$
P^N:X\to X_N.
$$ 
We define the finite dimensional approximation of \eqref{eq_1_bis} by 
\begin{equation} \label{eq_ab1N}
i\partial_t \psi_N = P^N A_t \psi_N, \quad \psi_N(0)=P^N \psi^0, \quad \psi_N(t)\in X_N.
\end{equation}
Now, $P^NA_t$ is self-adjoint and the finite dimensional system \eqref{eq_ab1N} can be written in matrix form as a system of ODE with continuous coefficients (see Section \ref{matrixform}). Therefore, existence and unicity of solutions holds, and the $L^2-$norm is conserved, i.e.
$$
\| \psi_N(t)\|_{L^2} = \| \psi_N(0)\|_{L^2}, \quad t>0. 
$$

\subsection{Convergence of the Galerkin approximation}

The aim of this section is to prove that the Galerkin approximation converges to the solution of the continuous problem \eqref{eq_1_bis}. We have the following result. 

\begin{theorem} \label{th_conv}
Assume that $a$ and $\eta$ satisfy the hypotheses of Theorem \ref{well_general} to guarantee the existence of a solution $\psi\in C([0,T];H^1_0)$ of \eqref{eq_1_bis} with initial data $\psi^0\in H^1_0$. Let $\psi_N$ be the solution of the corresponding finite dimensional approximation \eqref{eq_ab1N}. 
Then, 
\begin{equation} \label{eq_th1}
\|\psi(t)-\psi_N(t)\|_{L^2} \leq \left(1 +  2T\frac{\eta_M}{\pi}\right) \frac{\sqrt{\eta_M}}{\sqrt{3} \sqrt{N}} \| \psi( t)\|_{L^\infty((0,T);H^1_0)}, 
\end{equation}
where $\eta_M=\max_{t\in[0,T]}\eta(t)$.
\end{theorem}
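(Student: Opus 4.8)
The plan is to estimate the error $\psi-\psi_N$ by a standard Galerkin-type argument based on the conservation of the $L^2$-norm of both the exact and the approximate flow, together with the self-adjointness of $A_t$ and $P^NA_t$. First I would split the error as
$$
\psi(t)-\psi_N(t) \;=\; \bigl(\psi(t)-P^N\psi(t)\bigr) \;+\; \bigl(P^N\psi(t)-\psi_N(t)\bigr)\;=\;e_N^\perp(t)+e_N(t),
$$
where $e_N^\perp(t)=(I-P^N)\psi(t)$ is the projection error and $e_N(t)\in X_N$ is the ``consistency'' part. The first term is controlled directly from the regularity $\psi\in L^\infty((0,T);H^1_0)$: since $\{w_k\}$ are the Dirichlet eigenfunctions, $\|(I-P^N)\psi\|_{L^2}\le \nu_{N+1}^{-1/2}\|\partial_x\psi\|_{L^2}\le (\pi\sqrt{N+1})^{-1}\|\psi\|_{L^\infty((0,T);H^1_0)}$, which already has the right $1/\sqrt N$ decay (the factor $\sqrt{\eta_M}/\sqrt3$ in the statement is presumably reorganized so that a single common constant appears out front; I would keep track of the precise constants only at the end).

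The main work is the bound on $e_N(t)$. Writing the equation for $e_N$: from \eqref{eq_ab1N} and the projected exact equation, $i\partial_t e_N = P^NA_t e_N + P^N A_t e_N^\perp - iP^N\partial_t(I-P^N)\psi$, but it is cleaner to test the exact equation against $X_N$ and subtract. The key point is that $P^NA_t P^N$ is self-adjoint on $X_N$, so $\frac{d}{dt}\|e_N\|_{L^2}^2 = 2\,\mathrm{Im}\,\langle e_N, P^NA_t e_N^\perp\rangle$ (plus a term from $\partial_t P^N\psi$ which vanishes since $P^N$ is time-independent). Hence $\frac{d}{dt}\|e_N\|_{L^2} \le \|P^N A_t e_N^\perp\|_{L^2} = \|P^N A_t (I-P^N)\psi\|_{L^2}$, and since $e_N(0)=0$,
$$
\|e_N(t)\|_{L^2} \;\le\; \int_0^t \bigl\|P^N A_s (I-P^N)\psi(s)\bigr\|_{L^2}\,ds .
$$
So everything reduces to estimating $\|P^N A_s(I-P^N)\psi\|_{L^2}$. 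Here I expect the main obstacle: $A_s$ is the Laplacian with the $\delta$-jump condition, which is unbounded, so $A_s(I-P^N)\psi$ need not be small pointwise in the naive sense — one must use the precise structure of $A_s$ and of the basis. The trick is to write $A_s = -\partial_{xx}^2$ (acting on $D(A_s)$) and note that for $w_k\in X_N$, $\langle w_k, A_s(I-P^N)\psi\rangle = \langle -\partial_{xx}^2 w_k, (I-P^N)\psi\rangle + (\text{boundary terms at }a(s))$; the boundary term is exactly $\eta(s)\,\overline{w_k(a(s))}\,((I-P^N)\psi)(a(s))$ coming from the jump condition, while $-\partial_{xx}^2 w_k = \nu_k w_k$ so $\langle \nu_k w_k,(I-P^N)\psi\rangle = 0$ since $\nu_k w_k\in X_N \perp (I-P^N)\psi$. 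Thus only the Dirac boundary term survives:
$$
\bigl\|P^N A_s(I-P^N)\psi(s)\bigr\|_{L^2}^2 \;=\; \eta(s)^2\,\bigl|((I-P^N)\psi(s))(a(s))\bigr|^2\;\sum_{k=1}^N |w_k(a(s))|^2 .
$$
I would then bound $\sum_{k=1}^N|w_k(a(s))|^2 \le 2N$ (since $|w_k|\le\sqrt2$) and, crucially, $|((I-P^N)\psi(s))(a(s))| \le \|(I-P^N)\psi(s)\|_{L^\infty} \le C\|(I-P^N)\psi(s)\|_{H^1_0}^{1/2}\|(I-P^N)\psi(s)\|_{L^2}^{1/2}$ or, more directly, using the explicit Fourier tail $\|(I-P^N)\psi\|_{L^\infty}\le \sum_{k>N}|c_k|\sqrt2 \le \sqrt2\bigl(\sum_{k>N}k^2|c_k|^2\bigr)^{1/2}\bigl(\sum_{k>N}k^{-2}\bigr)^{1/2} \le \frac{C}{\sqrt N}\|\partial_x\psi\|_{L^2}$. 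Combining, $\|P^NA_s(I-P^N)\psi(s)\|_{L^2} \le \eta_M\sqrt{2N}\cdot \frac{C}{\sqrt N}\|\psi\|_{H^1_0} = C'\eta_M\|\psi\|_{H^1_0}$, the $N$'s cancelling, which after integration in $s$ gives the $T\eta_M$ factor in \eqref{eq_th1}. Finally I would assemble $\|\psi-\psi_N\| \le \|e_N^\perp\| + \|e_N\| \le \bigl(\frac{1}{\pi\sqrt N} + C'T\eta_M\bigr)\|\psi\|_{L^\infty((0,T);H^1_0)}$ and track the numerical constants carefully (the $\sqrt{\eta_M}/(\sqrt3\sqrt N)$ normalization and the $2T\eta_M/\pi$ factor) to match the stated inequality exactly — this bookkeeping of sharp constants, rather than any conceptual difficulty, is where care is needed. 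The one genuine subtlety to be careful about is justifying the integration-by-parts identity that isolates the Dirac term, i.e. that $\langle w_k, A_s v\rangle = \langle \nu_k w_k, v\rangle + \eta(s)\overline{w_k(a(s))}v(a(s))$ for $v=(I-P^N)\psi(s)$; this requires $\psi(s)$ to have enough regularity to make $v(a(s))$ and the integration by parts meaningful, which is supplied by $\psi\in H^1_0$ and the $H^2$-away-from-$a(s)$ structure built into $D(A_s)$ — or, alternatively, one argues by density from $\psi^0\in D(A_0)$ and then passes to the limit, since \eqref{eq_th1} is a closed estimate continuous in $\|\psi\|_{L^\infty((0,T);H^1_0)}$.
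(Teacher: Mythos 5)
Your first half reproduces, essentially verbatim, the paper's Lemma 4.1: the splitting $\psi-\psi_N=(I-P^N)\psi+(P^N\psi-\psi_N)$, the Duhamel/energy identity reducing the second piece to $\int_0^t\|P^NA_s(I-P^N)\psi(s)\|_{L^2}\,ds$, the observation that only the Dirac term survives in $P^NA_s(I-P^N)$, and the bound $\sum_{k\le N}|w_k(a(s))|^2\le 2N$ are all exactly the paper's steps. The problem is the last estimate. You bound $|((I-P^N)\psi(s))(a(s))|\le CN^{-1/2}\|\partial_x\psi\|_{L^2}$ and conclude $\|P^NA_s(I-P^N)\psi(s)\|_{L^2}\le C'\eta_M\|\psi\|_{H^1_0}$ with ``the $N$'s cancelling''. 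That cancellation is fatal: after integrating in $s$ your Duhamel term is $C'T\eta_M\|\psi\|_{L^\infty H^1_0}$, which does not tend to zero as $N\to\infty$, whereas the claimed estimate \eqref{eq_th1} is $O(N^{-1/2})$. No bookkeeping of constants can close this; as written your argument does not even prove convergence of the scheme, only boundedness of the error.

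The missing idea is the paper's Lemma 4.2. One must \emph{not} discard the projection: keeping the sharp form of the pointwise bound gives $\|P^NA_s(I-P^N)\psi(s)\|_{L^2}\le \tfrac{2\eta(s)}{\pi}\|(I-P^N)\psi(s)\|_{H^1_0}$, i.e.\ the tail $H^1_0$-norm rather than the full one, and the whole point of the theorem is to quantify the decay of this tail. For a generic $H^1_0$ function it decays without a rate; the rate $N^{-1/2}$ comes from expanding $\psi(t)=\sum_k\hat\psi_k(t)\phi_k(t,\cdot)$ in the instantaneous eigenbasis of $A_t$ and proving that each eigenfunction satisfies
$\|(I-P^N)\phi_k(t,\cdot)\|_{H^1_0}\le \tfrac{\sqrt2}{\pi}\sqrt{\eta(t)/N}\,\sqrt{\lambda_k-\nu_1}$.
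This is obtained by integrating $\int_0^1\phi_k w_j''$ by parts twice across $x=a(t)$, which yields $|c_j^k|\le \eta(t)|\phi_k(t,a(t))||w_j(a(t))|/|\nu_j-\lambda_k|$, combined with the energy identity $\lambda_k=\|\phi_k'\|_{L^2}^2+\eta(t)|\phi_k(t,a(t))|^2$ and Poincar\'e, which give $\eta(t)|\phi_k(t,a(t))|^2\le\lambda_k-\nu_1$. A Cauchy--Schwarz in $k$ (weighted by $\lambda_k$) then converts these tail bounds into $\|(I-P^N)\psi\|_{L^\infty H^1_0}\lesssim \sqrt{\eta_M/N}\,\|\psi\|_{L^\infty H^1_0}$ and produces the stated constant. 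This spectral ingredient, which exploits that $\psi(t)$ is a superposition of eigenfunctions of $A_t$ (piecewise $H^2$ with a quantified jump) and not merely an $H^1_0$ function, is the genuinely nontrivial part of the proof and is absent from your proposal.
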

\begin{proof}
We write the solution $\psi$ as linear combination of eigenfunctions,
\begin{equation}
\psi(t,x)=\sum_{k=1}^\infty \hat \psi_k (t) \phi_k(t,x), \quad\quad \hat \psi_k (t) = \int_0^1 \psi(t,x) \overline{\phi_k(t,x)} \; dx .
\end{equation}
Note that 
$$
\|\psi(t)\|^2_{D(|A_t|^n)} =  \sum_{k=1}^\infty \lambda_k^{2n}(t) |\hat \psi_k (t)|^2, \quad n=0, \; 1/2, \; 1, 
$$
and in particular $D(|A_t|^{1/2})=H^1_0(0,1)$.  Let us consider the following two estimates ensured by Lemma \ref{le_1} and Lemma \ref{le_2} below:
\begin{equation*}
\| \psi(t)-\psi_N(t)\|_{L^2}\leq \left(1 +  2T\frac{\eta_M}{\pi}\right) \; \| (I-P^N)\psi\|_{L^\infty((0,T);H^1_0)}, 
\end{equation*}
\begin{equation*} 
\| (I-P^N) \phi_k(t,\cdot) \|_{H^1_0} \leq \frac{\sqrt{2}}{\pi}\frac{\sqrt{\eta(t)}}{\sqrt{N}}\sqrt{\lambda_k-\nu_1},\ \ \ \ \  \ \forall k\in\N^*.
\end{equation*}
The proof follows by gathering these inequalities in the following computations 
\begin{equation*}\begin{split}
& \| \psi(t)-\psi_N(t)\|_{L^2}\leq \left(1 +  2T\frac{\eta_M}{\pi}\right) \left\| (I-P^N)\sum_{k=1}^\infty \hat \psi_k(t)\phi_k(t,x)\right\|_{L^\infty((0,T);H^1_0)}\\ \nonumber
& \quad \leq \left(1 +  2T\frac{\eta_M}{\pi}\right)\max_{t\in[0,T]} \sum_{k=1}^\infty |\hat \psi_k(t)| \; \| (I-P^N)\phi_k(t,\cdot)\|_{H^1_0}  \\ \nonumber
& \quad \leq \left(1 +  2T\frac{\eta_M}{\pi}\right) \max_{t\in[0,T]}\left(\sum_{k=1}^\infty |\hat \psi_k(t)|^2 \; \lambda_k(t)\right)^{1/2} \;  \left(\sum_{k=1}^\infty  \frac{\| (I-P^N)\phi_k(t,\cdot)\|_{H^1_0}^2}{\lambda_k(t)}\right)^{1/2}   \\ \nonumber
& \quad \leq \left(1 +  2T\frac{\eta_M}{\pi}\right)\| \psi(t) \|_{L^\infty((0,T);H^1_0)} \; \max_{t\in[0,T]} \left(\sum_{k=1}^\infty  \frac{2\eta(t)(\lambda_k(t)-\nu_1)}{\pi^2N\lambda_k(t)}\right)^{1/2} \\ 
& \quad \leq \left(1 +  2T\frac{\eta_M}{\pi}\right) \frac{\sqrt{2\eta_M}}{\pi \sqrt{N}}\| \psi(t) \|_{L^\infty((0,T);H^1_0)} \;  \left(\sum_{k=1}^\infty  \frac{1}{k^2\pi^2}\right)^{1/2}.
\end{split}\end{equation*}
Here, we have used the estimate $\lambda_k(t)\geq \nu_k=k^2\pi^2$, which is an easy consequence of the min-max principle for the eigenvalues. This concludes the proof.  \qedhere
\end{proof}

\begin{lemma} \label{le_1}
Under the hypotheses of Theorem \ref{th_conv}, the following estimate holds,
\begin{equation} \label{eq_es1}
\| \psi(t)-\psi_N(t)\|_{L^2}\leq (1 +  2T\eta_M/\pi) \; \| (I-P^N)\psi(t)\|_{L^\infty((0,T);H^1_0)}, \quad t>0 .
\end{equation}
\end{lemma}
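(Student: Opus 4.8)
The plan is the classical ``Galerkin projection $+$ stability'' argument, with the key point that the basis $\{w_k\}$ consists of Dirichlet--Laplacian eigenfunctions, so that the bulk $-\partial_{xx}^2$ part of the consistency error is killed by $P^N$ and only the delta contribution survives. Write $\psi(t)-\psi_N(t)=(I-P^N)\psi(t)-e_N(t)$ with $e_N:=\psi_N-P^N\psi\in X_N$, so $\|\psi(t)-\psi_N(t)\|_{L^2}\le\|(I-P^N)\psi(t)\|_{L^2}+\|e_N(t)\|_{L^2}$. Since $P^N$ is $L^2$-orthogonal onto $X_N=\mathrm{span}\{w_1,\dots,w_N\}$, the Poincar\'e inequality on $(0,1)$ gives $\|(I-P^N)\psi(t)\|_{L^2}\le\|(I-P^N)\psi(t)\|_{H^1_0}$, which produces the constant $1$ in the statement; it remains to show $\|e_N(t)\|_{L^2}\le\frac{2T\eta_M}{\pi}\|(I-P^N)\psi\|_{L^\infty((0,T);H^1_0)}$.

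Since $P^N$ is bounded and time-independent, $i\partial_t(P^N\psi)=P^N(i\partial_t\psi)=P^NA_t\psi$ (using $i\partial_t\psi=A_t\psi$ in $H^{-1}$ and interpreting $P^NA_t$ through the sesquilinear form of $A_t$, which is defined on $H^1_0$); hence $i\partial_t e_N=P^NA_t e_N-P^NA_t(I-P^N)\psi$. Pairing $\partial_t e_N$ with $e_N$ in $L^2$, the term $P^NA_te_N$ contributes purely imaginary because $P^NA_t$ is self-adjoint on $X_N$, so $\frac{d}{dt}\|e_N\|_{L^2}\le\|P^NA_t(I-P^N)\psi\|_{L^2}$; together with $e_N(0)=P^N\psi^0-P^N\psi^0=0$ this yields $\|e_N(t)\|_{L^2}\le\int_0^t\|P^NA_s(I-P^N)\psi(s)\|_{L^2}\,ds$.

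Now estimate $P^NA_s(I-P^N)\psi$. For $j\le N$, $\langle w_j,P^NA_s(I-P^N)\psi\rangle=\langle\partial_x w_j,\partial_x(I-P^N)\psi\rangle+\eta(s)\overline{w_j(a(s))}\big((I-P^N)\psi\big)(a(s))$, and the first term vanishes because the $w_k$ are orthogonal for $\langle\partial_x\cdot,\partial_x\cdot\rangle$, so $(I-P^N)\psi\perp w_j$ in $H^1_0$. Thus $P^NA_s(I-P^N)\psi=\eta(s)\big((I-P^N)\psi\big)(a(s))\sum_{j\le N}\overline{w_j(a(s))}w_j$, with $L^2$-norm $\eta(s)\,\big|\big((I-P^N)\psi\big)(a(s))\big|\,\big(\sum_{j\le N}|w_j(a(s))|^2\big)^{1/2}$. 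I bound $\sum_{j\le N}|w_j(a)|^2=\sum_{j\le N}2\sin^2(j\pi a)\le 2N$, and for the amplitude I expand $(I-P^N)\psi=\sum_{k>N}c_kw_k$ and apply Cauchy--Schwarz to the tail: $\big|\big((I-P^N)\psi\big)(a)\big|\le\sqrt2\sum_{k>N}|c_k|\le\sqrt2\big(\sum_{k>N}k^{-2}\big)^{1/2}\big(\sum_{k>N}k^2|c_k|^2\big)^{1/2}\le\frac{\sqrt2}{\pi\sqrt N}\|(I-P^N)\psi\|_{H^1_0}$, using $\sum_{k>N}k^{-2}\le 1/N$ and $\|(I-P^N)\psi\|_{H^1_0}^2=\pi^2\sum_{k>N}k^2|c_k|^2$. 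Multiplying, the factors of $N$ cancel: $\|P^NA_s(I-P^N)\psi(s)\|_{L^2}\le\frac{2\eta(s)}{\pi}\|(I-P^N)\psi(s)\|_{H^1_0}\le\frac{2\eta_M}{\pi}\|(I-P^N)\psi\|_{L^\infty((0,T);H^1_0)}$; integrating over $[0,t]\subset[0,T]$ finishes the bound on $e_N$, and adding the two contributions gives $(1+2T\eta_M/\pi)\|(I-P^N)\psi\|_{L^\infty((0,T);H^1_0)}$.

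The step requiring care --- the main obstacle --- is precisely this consistency estimate: the naive bound $\|P^NA_s(I-P^N)\psi\|_{L^2}\le\|A_s(I-P^N)\psi\|_{L^2}$ is both meaningless (since $(I-P^N)\psi\notin D(A_s)$ for a general $\psi\in H^1_0$) and useless (the Laplacian part has no $N$-decay). One must use the form, exploit that the trigonometric basis is orthogonal simultaneously in $L^2$ and in $H^1_0$ so that $P^N$ annihilates the whole $-\partial_{xx}^2$ residual, and then notice that the leftover rank-one delta term is benign because its amplitude decays like $N^{-1/2}$ while the vector $\sum_{j\le N}\overline{w_j(a)}w_j$ only grows like $N^{1/2}$. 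It is also worth recording that $\psi(t)\in H^1_0\hookrightarrow C^0([0,1])$ makes all pointwise evaluations and the pairings with $w_j$ legitimate, and that $e_N\in C^1([0,T];X_N)$ justifies differentiating $\|e_N\|_{L^2}^2$.
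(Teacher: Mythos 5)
Your proof is correct and follows essentially the same route as the paper: the same splitting into the projection error $(I-P^N)\psi$ plus a consistency term driven by $P^NA_s(I-P^N)\psi$ (you phrase the latter as a differential inequality for $e_N=\psi_N-P^N\psi$ where the paper uses Duhamel with the unitary discrete flow $\Gamma^N_{s,t}$, which is the same computation), and the same key observation that $P^N$ annihilates the Laplacian part of the residual, leaving a rank-one delta term bounded by $\frac{2\eta(s)}{\pi}\|(I-P^N)\psi\|_{H^1_0}$ via the identical Cauchy--Schwarz with weights $\nu_j$. The constants match the paper's exactly.
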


\begin{proof}
We apply the projection operator $P^N$ to the equation \eqref{eq_1_bis},
\begin{equation*}
i\partial_t P^N\psi = P^N A_t \psi=P^N A_t P^N\psi+P^N A_t (I-P^N)\psi, \quad P^N\psi(0)=P^N \psi_0.
\end{equation*}
Therefore, $P^N \psi$ satisfies system \eqref{eq_ab1N} with an extra term. If we define $\Gamma_{s,t}^N$ the flow associated to the dynamics of the equation \eqref{eq_ab1N} then,
$$
P^N\psi=\Gamma_{0,t}^N P^N \psi_0 + \int_0^t \Gamma_{s,t}^N P^N A_s (I-P^N)\psi(s)\; ds = \psi_N (t)+ \int_0^t \Gamma_{s,t}^N P^N A_s (I-P^N)\psi(s)\; ds. 
$$
Thus, we obtain 
$$
\psi(t)-\psi_N(t)= (I-P^N) \psi(t) +  \int_0^t \Gamma_{s,t}^N P^N A_s (I-P^N)\psi(s)\; ds.
$$
Thanks to the Minkowski inequality and the fact that $\Gamma_{s,t}^N$ is unitary in $L^2(0,1)$, we have
\begin{equation} \label{eq_conv}
\| \psi(t)-\psi_N(t)\|_{L^2}\leq \|(I-P^N) \psi(t)\|_{L^2} +  \int_0^t \|  P^N A_s (I-P^N)\psi(s)\|_{L^2} \; ds.
\end{equation}
We now estimate the second term of the right hand side of the identity \eqref{eq_conv} by analyzing the operator $P^N A_t (I-P^N)$. For a given $v\in H^1_0 = D(|\Delta|^{1/2})$, we can write 
$$
v(x)=\sum_{j=1}^\infty \hat v_k w_k(x), \quad \quad \hat v_k=\int_0^1 v(x)\overline{w_k(x)} \; dx ,   \quad  \quad  \sum_{k=1}^\infty |\hat v_k|^2 \nu_k <\infty.
$$ 
Thus, $(I-P^N)v=\sum_{j=N+1}^\infty \hat v_j w_j$ and 
$$
A_t(I-P^N)v(x)=\sum_{j=N+1}^\infty \nu_j \hat v_j  w_j(x)+ \eta(t)\delta_{a(t)} (x) \left(\sum_{j=N+1}^\infty \hat v_j w_j(x)\right),
$$
which implies 
$$
P^NA_t(I-P^N)v(x)=P^N \left(\eta(t)\delta_{a(t)} (x) \left(\sum_{j=N+1}^\infty \hat v_j w_j(x)\right)\right)=\eta(t)\sum_{k=1}^N\sum_{j=N+1}^\infty \hat v_j w_j(a(t))w_k(a(t)) w_k(x).
$$
The previous relation yields that
\begin{eqnarray} \nonumber                               
&& \| P^NA_t(I-P^N)v \|_{L^2}^2 = \eta^2(t) \sum_{k=1}^N |w_k(a(t))|^2 \left| \sum_{j=N+1}^\infty \hat v_j w_j(a(t))\right|^2 \\ \nonumber
&& \quad \leq \eta^2(t) 2N \left( \sum_{j=N+1}^\infty \nu_j|\hat v_j|^2 \right) \left( \sum_{l=N+1}^\infty \frac{|w_l(a(t))|^2}{\nu_l}\right)\\ \nonumber
&& \quad \leq \eta^2(t) 2N \|(I-P^N)v\|_{H^1_0}^2 \left( \sum_{j=N+1}^\infty \frac{2}{\nu_j}\right)\\ \label{eq_18}
&& \quad \leq \frac{4}{\pi^2}\eta^2(t) \|(I-P^N)v\|_{H^1_0}^2 N\sum_{j=N+1}^\infty \frac1{j^2}
\leq \frac{4}{\pi^2} \eta^2(t) \|(I-P^N)v\|_{H^1_0}^2.
\end{eqnarray}
In the last computations, we used that $\max_{x\in[0,1]}|w_k(x)|^2\leq 2$ for all $k\geq 1$ and the estimate
$$
N \sum_{j=N+1}^\infty \frac1{j^2} \leq N\int_{N}^\infty \frac1{x^2} dx =1. 
$$ 
Finally, if we apply inequality \eqref{eq_18} in \eqref{eq_conv}, then we obtain \eqref{eq_es1}. This concludes the proof.\qedhere
\end{proof}

\begin{lemma} \label{le_2}
Under the previous hypotheses on $a$ and $\eta$, the following holds, 
\begin{equation} \label{eq_le2}
\| (I-P^N) \phi_k(t,\cdot) \|_{H^1_0} \leq \frac{\sqrt{2}}{\pi}\frac{\sqrt{\eta(t)}}{\sqrt{N}}\sqrt{\lambda_k-\nu_1}.
\end{equation}
\end{lemma}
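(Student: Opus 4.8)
The plan is to expand the normalized eigenfunction $\phi_k(t,\cdot)$ of $A_t$ in the Dirichlet basis $\{w_j\}_{j\in\N^*}$ and to show that its high‑frequency tail is governed by the single scalar $\phi_k(t,a(t))$, which in turn is controlled a priori by $\lambda_k-\nu_1$. Write $\phi_k(t,\cdot)=\sum_{j\geq1}d_j(t)w_j$ with $d_j(t)=\la\phi_k(t,\cdot),w_j\ra_{L^2}$; since the $w_j$ are orthogonal both in $L^2$ and in $H^1_0$, one has $(I-P^N)\phi_k(t,\cdot)=\sum_{j>N}d_j w_j$ and $\|(I-P^N)\phi_k(t,\cdot)\|_{H^1_0}^2=\sum_{j>N}\nu_j|d_j|^2$. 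The first ingredient is a recursion for the $d_j$: testing the weak form of $A_t\phi_k=\lambda_k\phi_k$, namely $\la\dd_x\phi_k,\dd_x v\ra_{L^2}+\eta(t)\phi_k(t,a(t))\overline{v(a(t))}=\lambda_k\la\phi_k,v\ra_{L^2}$ for every $v\in H^1_0$, against $v=w_j$ and integrating by parts separately on $(0,a(t))$ and $(a(t),1)$ — the boundary contributions at $a(t)$ cancel because $w_j'$ is continuous there, so that $\la\dd_x\phi_k,\dd_x w_j\ra_{L^2}=\nu_j d_j$ — yields
$$(\lambda_k-\nu_j)\,d_j=\eta(t)\,\phi_k(t,a(t))\,w_j(a(t)),\qquad j\geq1.$$

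The second ingredient is the a priori bound on $\phi_k(t,a(t))$. Evaluating the quadratic form of $A_t$ on $\phi_k$ and using $\|\phi_k(t,\cdot)\|_{L^2}=1$ gives $\lambda_k=\|\dd_x\phi_k(t,\cdot)\|_{L^2}^2+\eta(t)|\phi_k(t,a(t))|^2$; since $\|\dd_x\phi_k(t,\cdot)\|_{L^2}^2\geq\nu_1\|\phi_k(t,\cdot)\|_{L^2}^2=\nu_1$ by the Poincaré inequality on $(0,1)$, one obtains $\eta(t)|\phi_k(t,a(t))|^2\leq\lambda_k-\nu_1$, and more generally the Bessel‑type inequality $\sum_j(\nu_j-\nu_1)|d_j|^2\leq\lambda_k-\nu_1$.

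Combining the two ingredients with $|w_j(a(t))|^2=2\sin^2(j\pi a(t))\leq2$ and $\nu_j=j^2\pi^2$,
$$\|(I-P^N)\phi_k(t,\cdot)\|_{H^1_0}^2=\eta(t)^2|\phi_k(t,a(t))|^2\sum_{j>N}\frac{\nu_j|w_j(a(t))|^2}{(\lambda_k-\nu_j)^2}\leq 2\,\eta(t)(\lambda_k-\nu_1)\sum_{j>N}\frac{\nu_j}{(\lambda_k-\nu_j)^2},$$
so it remains to estimate $\sum_{j>N}\nu_j(\lambda_k-\nu_j)^{-2}$. For the indices $j>N$ lying safely above the resonance, i.e. with $\nu_j\geq2\lambda_k$, one has $|\lambda_k-\nu_j|\geq\nu_j/2$, hence $\nu_j(\lambda_k-\nu_j)^{-2}\leq4\nu_j^{-1}=4(j^2\pi^2)^{-1}$, and the integral comparison $N\sum_{j>N}j^{-2}\leq N\int_N^\infty x^{-2}\,dx=1$ bounds the sum by a constant multiple of $(\pi^2N)^{-1}$; taking square roots gives the stated inequality, the precise numerical constant coming from a slightly sharper version of these last estimates.

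The main obstacle is exactly the passage $\nu_j\geq2\lambda_k$: it holds for all $j>N$ only when $\lambda_k$ sits comfortably below $\nu_{N+1}$, while if $\lambda_k$ is close to some $\nu_j$ with $j>N$ the factor $(\lambda_k-\nu_j)^{-2}$ becomes large. One deals with this by locating the resonance through the interlacing $\nu_k\leq\lambda_k\leq\nu_{k+1}$ — the min‑max consequence already used right after Theorem \ref{th_conv} — which excludes the dangerous indices $j>N$ for $k$ in the relevant range; for any remaining near‑resonant index one controls $\nu_j|d_j|^2$ termwise by $\nu_j|d_j|^2\leq\frac{\nu_j}{\nu_j-\nu_1}(\lambda_k-\nu_1)$, a direct consequence of the inequality $\sum_j(\nu_j-\nu_1)|d_j|^2\leq\lambda_k-\nu_1$ of the second step. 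The scalar identity $\eta(t)\sum_j|w_j(a(t))|^2(\lambda_k-\nu_j)^{-1}=1$, which follows from the eigenvalue equation for the point interaction, is the convenient bookkeeping device for this last count.
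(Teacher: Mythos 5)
Your proposal follows the paper's own route step for step through the core of the argument: the recursion $(\lambda_k-\nu_j)\,d_j=\eta(t)\,\phi_k(t,a(t))\,w_j(a(t))$ obtained by testing against $w_j$, the a priori bound $\eta(t)|\phi_k(t,a(t))|^2\le\lambda_k-\nu_1$ from the quadratic form plus Poincar\'e, and the resulting reduction of $\|(I-P^N)\phi_k(t,\cdot)\|_{H^1_0}^2$ to $2\eta(t)(\lambda_k-\nu_1)\sum_{j>N}\nu_j(\nu_j-\lambda_k)^{-2}$ are exactly the printed computations, and up to that point your proof is correct.

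The gap is in the final summation, and you have located it accurately but not closed it. In the regime $\nu_j\ge 2\lambda_k$ your comparison gives $4/(\pi^2N)$ for the tail sum and hence the constant $2\sqrt2/\pi$ rather than $\sqrt2/\pi$; deferring to ``a slightly sharper version of these last estimates'' is not a proof. More seriously, your patch for the near-resonant indices cannot work: the termwise bound $\nu_j|d_j|^2\le\frac{\nu_j}{\nu_j-\nu_1}(\lambda_k-\nu_1)$ is of size $\lambda_k-\nu_1$, whereas the square of the right-hand side of \eqref{eq_le2} is $\frac{2\eta(t)}{\pi^2N}(\lambda_k-\nu_1)$, so a single such term already exceeds the target unless $\eta(t)\ge\pi^2N/2$ --- the bound carries neither the factor $\eta(t)$ nor the factor $1/N$ that the statement requires. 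Indeed, at $\eta(t)=0$ and $k>N$ one has $\phi_k=w_k$, so the left-hand side of \eqref{eq_le2} equals $\sqrt{\nu_k}$ while the right-hand side vanishes; no termwise repair of the kind you sketch can succeed, and the restriction $\lambda_k<\nu_{N+1}$ (essentially $k\le N$, via the interlacing you quote) is genuinely needed, not merely convenient. For the record, the paper is no more careful here: it passes from $\sum_{j>N}\nu_j(\nu_j-\lambda_k)^{-2}$ to $\sum_{j>N}(\nu_j-1)^{-2}$ without justification, so your scruple about the resonance is well founded --- but as written your argument, like the paper's, does not establish \eqref{eq_le2} with the stated constant and range of $k$.
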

\begin{proof} We first compute the Fourier coefficients of the eigenfunction $\phi_k$ in the basis $\{ w_j\}_{j\in\N^*}$
\begin{eqnarray*}
c_j^k(t)&=&\int_0^1 \phi_k(t,x) w_j(x) \; dx =- \frac1{\nu_j} \int_0^1 \phi_{k}(t,x) w_{j}''(x)\; dx 
\\
&=& \frac1{\nu_j} \left(-\int_0^1 \phi_{k}(x,t) w_{j}''(x)\; dx+ \eta(t)\phi_{k}(a(t),t) w_{j}(a(t)) \right) - \frac1{\nu_j}\eta(t)\phi_{k}(a(t),t) w_{j}(a(t))\\
&=&\frac{\lambda_k}{\nu_j} \int_0^1 \phi_k(x,t) w_j(x) \; dx - \frac1{\nu_j}\eta(t)\phi_{k}(a(t),t) w_{j}(a(t))\\
&=&\frac{\lambda_k}{\nu_j} c_j^k- \frac1{\nu_j}\eta(t)\phi_{k}(a(t),t) w_{j}(a(t)).
\end{eqnarray*}
Therefore,
\begin{equation} \label{eq_int1}
|c_j^k|\leq \frac{\eta(t)}{|\nu_j-\lambda_k|} |\phi_k(t,a(t))w_j(a(t))|\leq \sqrt{2}\frac{\eta(t)}{|\nu_j-\lambda_k|} |\phi_k(t,a(t))|. 
\end{equation}
On the other hand, from the $L^2-$normalization of the eigenfunctions, we have
$$
\lambda_k=(A_t\phi_k,\phi_k)=\int_{0}^1 |\phi_{k}'(t,x)|^2 dx+\eta(t)|\phi_{k}(t,a(t))|^2. 
$$
Therefore, by Poincare inequality
$$
\eta(t)|\phi_{k}(t,a(t))|^2\leq \lambda_k-\nu_1 .
$$
Substituting in \eqref{eq_int1}, we finally obtain
$$
|c_j^k|\leq  \sqrt{2}\frac{\sqrt{\eta(t)}}{\nu_j-\lambda_k} \sqrt{\lambda_k-\nu_1}.
$$
The relation \eqref{eq_le2} is obtained thanks to the last inequality since
\begin{equation*}\begin{split}
\| (I-P^N) \phi_k \|_{H^1_0}^2 &=\sum_{j=N+1}^\infty |c_j^k|^2 \nu_j \leq 2\eta(t)(\lambda_k-\nu_1) \sum_{j=N+1}^\infty \frac{\nu_j}{(\nu_j-\lambda_k)^2} \\ &\leq 2\eta(t)(\lambda_k-\nu_1) \sum_{j=N+1}^\infty \frac{1}{(\nu_j-1)^2} \leq  2\eta(t)(\lambda_k-\nu_1) \frac1{\pi^2N}.\qedhere
\end{split}\end{equation*}\end{proof}

\subsection{Matrix formulation of the Galerkin method}\label{matrixform}
The aim of this subsection is to rewrite the finite dimensional system \eqref{eq_ab1N} in matrix form. Let $\psi_N\in C^1([0,T],X_N)$ be a solution of \eqref{eq_ab1N}. We consider its decomposition
$$
\psi_N(t,x)= \sum_{n= 1}^N \hat \psi_n(t) \sqrt{2} \sin(n\pi x).
$$
If we substitute this expression in \eqref{eq_ab1N} and project on the subspace generated by the eigenfunction $\sqrt{2} \sin(n\pi x)$ with $n=1,...,N,$ then we obtain
$$
i\hat \psi_n'(t)=n^2\pi^2 \hat \psi_n(t) +2\eta(t)\sum_{k= 1}^N \hat \psi_k(t)\sin(k\pi a(t) )\sin(n\pi a(t)), \quad n=1,...,N.
$$
We write this relation in matrix form as follows
$$
\Psi'=(D+P)\Psi,
$$
where $P(t)=-2i\eta(t) \; s\otimes s$ for  $s=(\sin(j\pi a(t)))_{j=1}^N$ and
$$
\Psi=\left( \begin{array}{c} \hat \psi_1\\ \hat\psi_2\\ ...\\ \hat \psi_N \end{array}\right), \quad D=-i \pi^2 \left( \begin{array}{cccc} 1^2 &0 &...&0 \\ 0&2^2&...&0\\ ...\\ 0&0&...&N^2 \end{array}\right).$$
The initial condition is obtained by projecting the Fourier representation of $\psi_0(x)$ as follows
$$
P^N\psi^0(x)=\sum_{n= 1}^N \hat \psi^0_n \sqrt{2} \sin(n\pi x).
$$
Therefore
$$
\Psi(0)=\Psi^0=\left( \begin{array}{c} \hat \psi_1^0\\ \hat \psi_2^0\\ ...\\ \hat \psi_N^0 \end{array}\right).
$$
Finally, we obtain the first order system 
\begin{equation} \label{eq_time}
\left\{ 
\begin{array}{l}
\Psi'=(D+P(t))\Psi,\\
\Psi(0)=\Psi^0.
\end{array}
\right.
\end{equation}

\subsection{Time discretization}

To solve system (\ref{eq_time}), we propose a midpoint implicit method in time:
\begin{equation}\label{eq_trpz}
\frac{\Psi^{k+1} -\Psi^k}{\Delta t} = \left(D+P\Big(\frac{t_{k+1}+t_k}{2} \Big)\right)\frac{\Psi^{k+1} +\Psi^k}{2}, \quad k=0,...,K-1, 
\end{equation}
that is second order accurate and convergent as long as 
$\|P''(t)\|$ is bounded in $t\in [0,T]$. Another advantage of this method, especially for long time simulations, is that it conserves the $L^2$-norm since $D$ and $P$ are purely imaginary matrixes as stated in the following result.
\begin{lemma}
Let $K\in\N^*$, $\Delta t=T/K$ and $t_k=k\; \Delta t $ with $k=0,...,K$ be a uniform mesh of the interval $[0,T]$. Consider $\{\Psi^k\}_{k=0}^K$ the discrete solution of (\ref{eq_time}) obtained with the implicit midpoint method (\ref{eq_trpz}).
The following identity is verified
\begin{equation} \label{eq_cons}
\| \Psi^{K} \|^2 = \| \Psi^{0} \|^2 . 
\end{equation}    
\end{lemma}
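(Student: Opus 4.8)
The plan is to show that each step of the implicit midpoint scheme preserves the Euclidean norm of $\Psi^k$, from which \eqref{eq_cons} follows by induction on $k$. The crucial structural fact is that for every midpoint time $t_{k+1/2}:=(t_{k+1}+t_k)/2$, the matrix $B_k:=D+P(t_{k+1/2})$ is skew-Hermitian, i.e. $B_k^*=-B_k$. Indeed $D=-i\pi^2\,\mathrm{diag}(1^2,\dots,N^2)$ is purely imaginary and diagonal, hence skew-Hermitian, and $P(t)=-2i\eta(t)\,s\otimes s$ with $s=(\sin(j\pi a(t)))_{j=1}^N\in\R^N$ and $\eta(t)\in\R$; since $s\otimes s$ is a real symmetric matrix, $P(t)$ is $-i$ times a Hermitian matrix, hence skew-Hermitian. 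Therefore $B_k^*=-B_k$.

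Next I would rewrite \eqref{eq_trpz} in the Cayley form. Setting $C_k:=\tfrac{\Delta t}{2}B_k$, the scheme reads $(I-C_k)\Psi^{k+1}=(I+C_k)\Psi^k$. Since $C_k$ is skew-Hermitian, $1$ is not an eigenvalue of $C_k$ (its eigenvalues are purely imaginary), so $I-C_k$ is invertible and $\Psi^{k+1}=(I-C_k)^{-1}(I+C_k)\Psi^k$. The matrix $\mathcal{C}_k:=(I-C_k)^{-1}(I+C_k)$ is unitary: using $C_k^*=-C_k$ one computes
\begin{equation*}
\mathcal{C}_k^*\mathcal{C}_k=(I+C_k^*)^{-1}(I-C_k^*)^{-1}\cdot\ \text{wait, more carefully: }\ \mathcal{C}_k^*=(I+C_k)^* {}^{-1}\cdots
\end{equation*}
— to avoid that bookkeeping in the writeup I would instead argue directly: $\|(I+C_k)v\|^2=\langle v,v\rangle+\langle v,C_kv\rangle+\langle C_kv,v\rangle+\|C_kv\|^2=\|v\|^2+\|C_kv\|^2$, because $\langle v,C_kv\rangle+\langle C_kv,v\rangle=\langle v,C_kv\rangle+\langle v,C_k^*v\rangle=\langle v,(C_k+C_k^*)v\rangle=0$. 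The same computation gives $\|(I-C_k)v\|^2=\|v\|^2+\|C_kv\|^2$. Applying the first identity with $v=\Psi^{k+1}$ and the second with... no: apply $\|(I-C_k)\Psi^{k+1}\|=\|(I+C_k)\Psi^k\|$ together with $\|(I-C_k)\Psi^{k+1}\|^2=\|\Psi^{k+1}\|^2+\|C_k\Psi^{k+1}\|^2$ and $\|(I+C_k)\Psi^k\|^2=\|\Psi^k\|^2+\|C_k\Psi^k\|^2$. This is not yet enough because of the mismatched $\|C_k\Psi^{k+1}\|$ versus $\|C_k\Psi^k\|$ terms, so the clean route really is the unitarity of $\mathcal{C}_k$: from $\mathcal{C}_k=(I-C_k)^{-1}(I+C_k)$ and $(I+C_k)(I-C_k)=(I-C_k)(I+C_k)$ one gets $\mathcal{C}_k\mathcal{C}_k^*=(I-C_k)^{-1}(I+C_k)(I+C_k^*)(I-C_k^*)^{-1}=(I-C_k)^{-1}(I+C_k)(I-C_k)(I+C_k)^{-1}=I$, using $C_k^*=-C_k$ at the middle step and commutativity of $(I\pm C_k)$. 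Hence $\mathcal{C}_k$ is unitary and $\|\Psi^{k+1}\|=\|\mathcal{C}_k\Psi^k\|=\|\Psi^k\|$.

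Finally I would close the induction: $\|\Psi^{k+1}\|^2=\|\Psi^k\|^2$ for each $k=0,\dots,K-1$ yields $\|\Psi^K\|^2=\|\Psi^0\|^2$, which is \eqref{eq_cons}. The main (and only mild) obstacle is purely algebraic bookkeeping: one must verify carefully that $I-C_k$ is invertible — which follows since the spectrum of the skew-Hermitian $C_k$ lies on the imaginary axis and therefore avoids $1$ — and that $(I+C_k)$ commutes with $(I-C_k)$, which is immediate. Everything else is the standard Cayley-transform argument, and no hypothesis beyond the purely imaginary structure of $D$ and $P(t)$ is needed; in particular the bound on $\|P''\|$ plays no role in the conservation identity.
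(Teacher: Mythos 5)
Your proof is correct, but it takes a different route from the paper's. The paper's argument is the discrete energy method: take the inner product of \eqref{eq_trpz} with $(\overline{\Psi^{k+1}}+\overline{\Psi^k})/2$ and take real parts; since $D+P$ is skew-Hermitian (``purely imaginary'' in the paper's phrasing), the quadratic form $\bigl\langle (D+P)v,v\bigr\rangle$ is purely imaginary and the right-hand side drops out, leaving $\|\Psi^{k+1}\|^2-\|\Psi^k\|^2=0$ directly. You instead recast the step as a Cayley transform $\Psi^{k+1}=(I-C_k)^{-1}(I+C_k)\Psi^k$ and prove that this map is unitary. Both arguments rest on the same structural fact, the skew-Hermitian character of $D+P(t)$, and your unitarity computation (using $(I+C_k)^*=I-C_k$ and the commutativity of $I\pm C_k$) is sound; the false starts in your draft should of course be edited out of a final writeup. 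Your approach is longer but buys something the paper's proof silently assumes: by checking that $1$ is not an eigenvalue of the skew-Hermitian matrix $C_k$, you establish that $I-C_k$ is invertible, i.e.\ that the implicit scheme actually has a unique solution $\Psi^{k+1}$ at each step. You also correctly note that the boundedness of $\|P''\|$ is irrelevant to conservation (it only enters the accuracy/convergence claim). The paper's method, on the other hand, is shorter, mirrors the continuous-level conservation proof, and generalizes immediately to situations where one only has a skew-symmetric form rather than an explicit matrix inverse.
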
   

\begin{proof}
Multiplying the system (\ref{eq_trpz}) by $(\overline{\Psi^{k+1}}+\overline{\Psi^k})/2$ and taking the real part, we easily obtain
$$
\| \Psi^{k+1} \|^2 - \| \Psi^{k} \|^2 = 0. 
$$
The relation  (\ref{eq_cons}) is ensured by summing the previous equation in $k=0,...,K-1$.
\end{proof}
\section{Numerical experiments}\label{numerical}

In this section, we give some implementation details and provide some numerical experiments illustrating the performance of the numerical method to show the energy permutation described in the introduction and ensured in Section \ref{permutation}. 

We are interested in capturing permutation of energy between different modes with a very particular type of controls involving delta measures located in some moving points. The whole process can be divided in several time periods that change at some intermediate times.

Let us first comment the difficulty to estimate both analytical and discretization parameters for such experiments. To fix ideas, we focus on the example given in the introduction where the permutation between the first two modes is described in 5 consecutive time intervals with a single Dirac potential. Concerning the analytical parameters, we have to choose suitable values for $\eta_M$ and $T$. On the one hand, $\eta_M$ must be sufficiently large to mimic the dynamics with the split interval $(0,a(t))\cup (a(t),1)$ as done in \cite{mio7}. In our experiments, this is almost the case for $\eta_M\sim 2000$. On the other hand, we have to choose $T$ large enough in order to guarantee adiabatic regimes both when we include/remove the Diracs (time subintervals $[0,T_1]$ and $[T_4,T]$) and when we move their supports (time subintervals $[T_1,T_2]$ and $[T_3,T_4]$). However, common results on adiabatic dynamics do not directly provide explicit estimates on how slow $\eta(t)$ and $a(t)$ should be to obtain such regime. Of course, this affects the final time $T$ which must be estimated from numerical experiments.

Concerning the discretization parameters, we have to choose $N$ and the time step $\Delta t$. According to (\ref{eq_th1}), error estimates depend on $TN^{-1/2}\eta_M^{3/2}$, up to a constant. Taking into account $N<200$ (to limit the computational cost) and $\eta_M=2000$, we see that this error estimate is not very useful even for $T=1$.

Fortunately, things work much better in practice. If we consider the control process described in the introduction, then we found that, for $N=200$, $\eta_M=2000$ and 
$$  
\eta(t)=\left\{ 
\begin{array}{ll}
\eta_M (1-\cos(\pi t / T_1))/2, & t\in [0,T_1],\\
\eta_M, & t\in [T_1,T_2],\\
\eta_M (1-\cos(\pi (t-T_2) / (T-T_2)))/2, & t\in [T_2,T],
\end{array}\right.
$$
an adiabatic regime is obtained both in the interval $[0,T_1]$ and $[T_4,T]$ as long as $T_1,T-T_4>50$, with maximum relative error of $10^{-2}$ in the energy distribution. 

Concerning the movement of $a(t)$, we show in Figure \ref{fig1_a} how a permutation of the energy is performed for a linear transition when $a'\sim 1$, while the dynamics is adiabatic for $a'\sim 10^{-4}$. However, the behavior in the intermediate region may strongly depend on the initial distribution of the energies. Thus, simulations taking into account velocities $a'$ in this intermediate region are very challenging. Note also that the transition is not monotone and there is a value of $a'$ for which the energy is transferred to one of the modes.    

\begin{figure}[h]
\begin{center}
\begin{tabular}{cc}
\includegraphics[height=5cm]{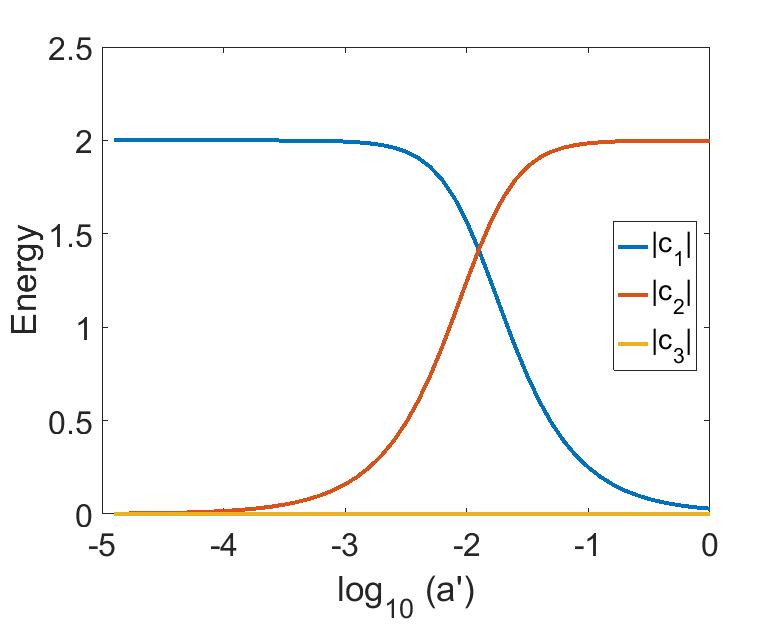} &
\includegraphics[height=5cm]{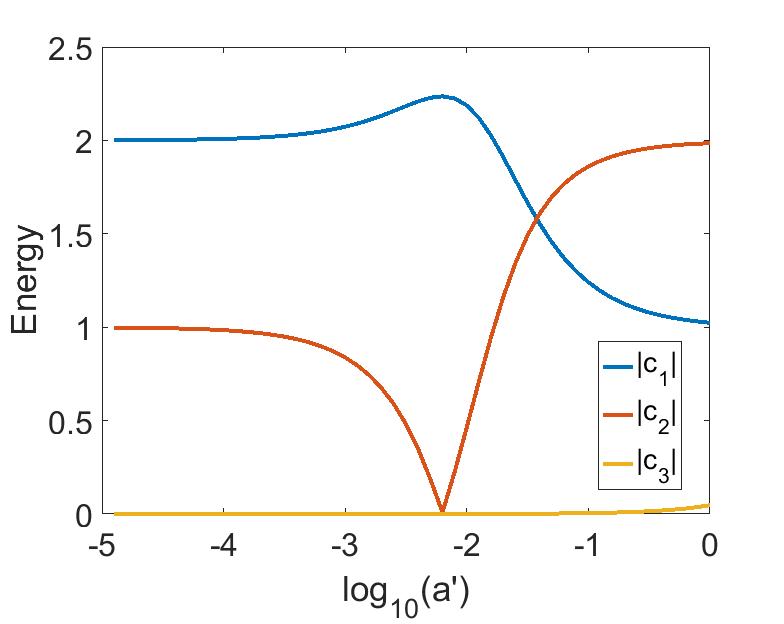} \\
$\psi_0(x)=2\phi_1(x,0)$ & $\psi_0(x)=2\phi_1(x,0)+\phi_2(x,0)$ 
\end{tabular}
\end{center}
\caption{The figure represents the distribution of the energy of the first $3$ modes after $T=0.01$. The control considered in the dynamics is $V(x,t)=\eta_M\delta_{x=a(t)}$ with $\eta_M=2000$ and
$x=a(t)$ a linear trajectory traversing $x=0.5$, from the right to the left, with different slopes $a'$. As indicated, the initial energy distribution ($t=0$) is different in both simulations.
We see that for $a'\sim 1$, there is a permutation of the energy while this permutation is lost as the slope decreases. For $a'\sim 10^{-4}$, the dynamics can be considered as adiabatic. We also observe that the transition region is very sensitive to the values of the energies we are permuting. \label{fig1_a}}
\end{figure}

In the experiments below, we set the parameters as follows.
\begin{enumerate}
\item  Take $N=200$, time step $\Delta t=10^{-3}$, $\eta_M\sim 2000$ and the time intervals where $\eta$ increases and decreases of length larger than $50$.
\item When we need to move $a(t)$ adiabatically, we consider $a'\sim 10^{-4}$.
\item When a non-adiabatic regime is required, we set $a'=1$ in a time interval of length $10^{-2}$.
\end{enumerate} 

It is important to note that with this choice, we are able to simulate both the adiabatic regime and the continuous non-adiabatic transition required in the control strategy. However, the phase in the numerical approximation can be very different from the one in the real solution.   

\subsection{Experiment 1} 
In this experiment, we apply the control strategy introduced in Section \ref{sec_cont_stra} to permute the energy of the first 3 Fourier modes. We follow the notation introduced there. We start with an initial data of the form 
\begin{equation} \label{exp1_T0}
\psi_0(x)=\psi(x,0)=\sum_{i=1}^3c_i(0)\phi_i(0,x),
\end{equation}
and we want obtain at time $t=T$
\begin{equation} \label{exp1_T_a}
\psi(T,x)=\sum_{i=1}^3c_i(T)\phi_i(0,x),
\end{equation}
with
$$
|c_1(T)|=|c_2(0)|, \qquad |c_2(T)|=|c_3(0)|, \qquad |c_3(T)|=|c_1(0)|.
$$
Note that this corresponds with the permutation $(\sigma(1),\sigma(2),\sigma(3))=(2,3,1)$.
We choose $c_1(0)=1$, $c_2(0)=1.5$, $c_3(0)=2$.
The strategy is then to introduce two Dirac potentials at suitable points and combine the adiabatic regime with some fast nonadiabatic transitions.  Time should be large to ensure the adiabatic process, both when we increase the Dirac masses and when we move them. We consider $T=200$. The weight of the Dirac $\eta(t)$ grows in $t\in [0,T/4]$ from 0 to $\eta_M$ and decreases from $\eta_M$ to 0 in $t\in[3T/4,T]$.

 The two Diracs are introduced in the first time interval $[0,T/4]$ adiabatically, and allow us to localize the three modes in three consecutive subintervals $I_j^s\subset(0,1)$, $j=1,...,3$, of decreasing lengths i.e. $l_1^s>l_2^s>l_3^s>l_1^s/2$. The left subinterval is the largest and the first mode is localized here. The middle one contains the second mode and the right one the third mode  (see Figure \ref{fig1}). According to the prescribed permutation, we move the Diracs in such a way that the lengths of the final subintervals satisfy $l_2^f>l_3^f>l_1^f >l_2^f/2$, maintaining the structure of the solution. In order to do that, we combine slow adiabatic movements with fast nonadiabatic transitions, whenever the lengths of two subintervals become equal. Thus, at time $t=3T/4$, we have a solution with the same aspect as when $t=T/4$, but the intervals $I_j^f\subset(0,1)$ have different lengths. In the final time subinterval $[3T/4,T]$, we remove adiabatically the two Diracs.  

In this particular example, we introduce the Diracs at the points $b_1^s=0.36$ and $b_2^s=0.7$ so that $l_1^s=0.36>l_2^s=0.34>l_3^s=0.3>l_1^s/2=0.18$.  Now, we fix $b_2^s$ and move $b_1^s$ from $0.36$ to $b_1^f=0.31$ with a transition at $x=0.35$, where the lengths of the first two intervals coincide. Then, we fix $b_1^f=0.31$ and move $b_2^s$ from $0.7$ to $b_2^f=0.68$ with a transition at $x=0.69$, where the first and third intervals have the same length. After these movements, we have $l_2^f=0.37>l_3^f=0.32>l_1^f=0.31>l_2^f/2=0.185$. In the intermediate region between the transitions, we move the Diracs adiabatically. 
The trajectories $a_1(t)$, $a_2(t)$ are also given in Figure \ref{fig1}.

\begin{figure}[H]
\begin{center}
\begin{tabular}{cc}
\includegraphics[height=5cm]{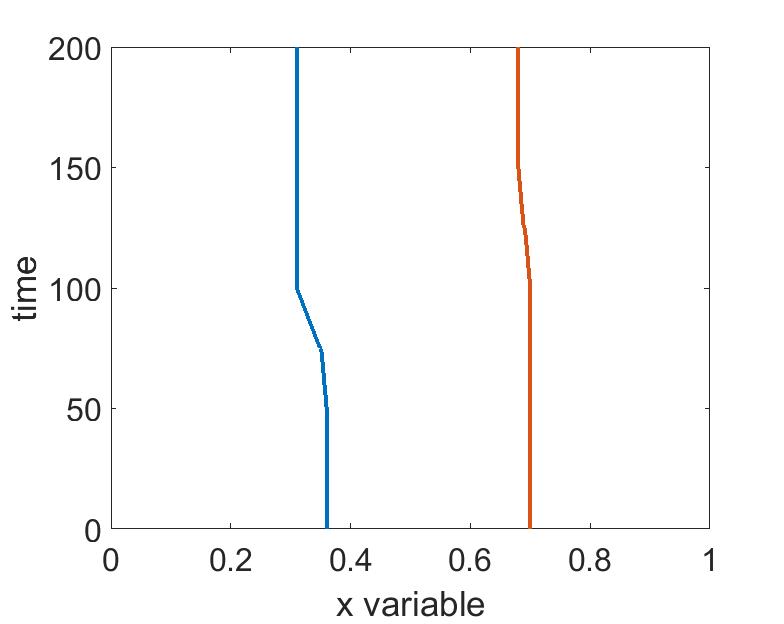} & \includegraphics[height=5cm]{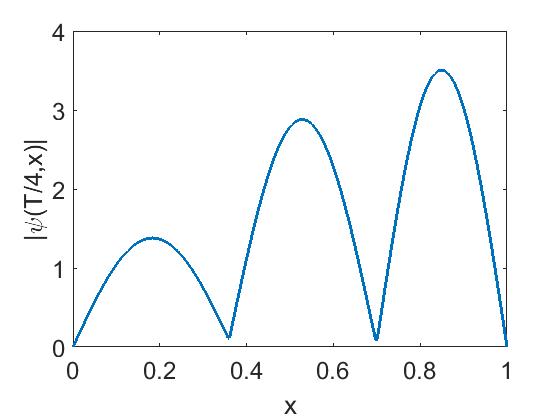} \\
Trajectories of the Diracs & Solution after including the Diracs.
\end{tabular}
\end{center}
\caption{Experiment I: The figure represents the trajectories $a_1(t)$ and $a_2(t)$, and the modulus of the solution after including the Diracs. In the positions of the Diracs, the solution is almost zero and the dynamics is similar to the one associated to a split domain. We observe how the different modes are localized in the three subintervals between the Diracs. \label{fig1}}
\end{figure}

In Figure \ref{fig1_b}, we show the behavior of the first 3 modes along the time (left plot) and the energy associated to the first Fourier coefficients (right plot). We see how this energy is shifted at the two jumps of the trajectories of $a_1(t)$ and $a_2(t)$. 

\begin{figure}[H]
\begin{center}
\begin{tabular}{cc}
\includegraphics[height=5cm]{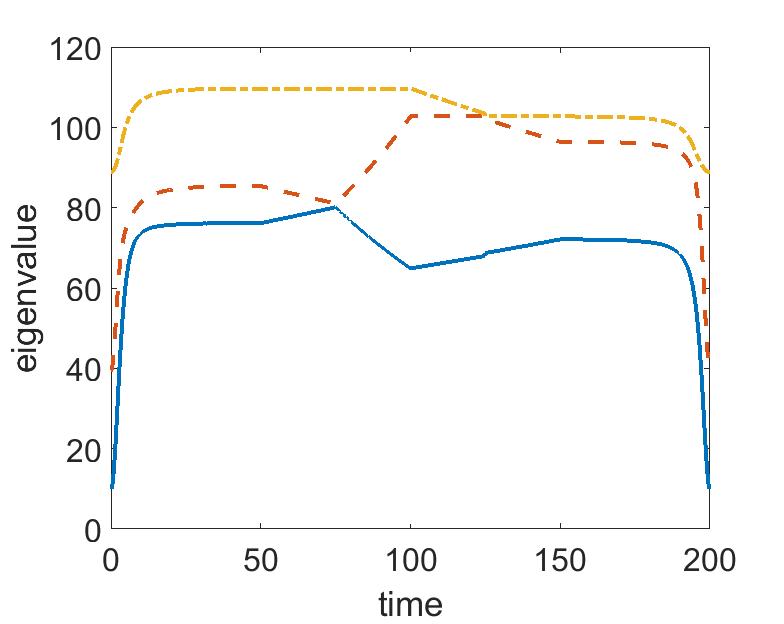} & \includegraphics[height=5cm]{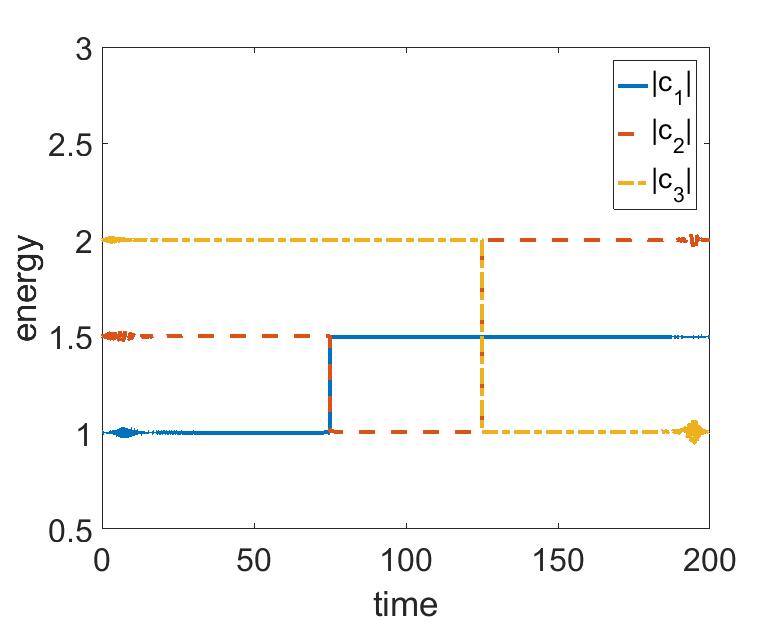} 
\end{tabular}
\end{center}
\caption{Experiment I: The figure represents the time evolution of the first 3 eigenvalues (left plot) and of the energy associated to the first Fourier coefficients (right plot). \label{fig1_b}}
\end{figure}

In our experiment, we do not obtain exactly \eqref{exp1_T_a}. The initial data is given by \eqref{exp1_T0}. The solution at time $T=200$ is given by 
\begin{equation} \label{exp1_T}
\psi(T,x)=\sum_{i=1}^3 c_i(T)w_i(x) + w(x),
\end{equation}
where $\|w\|_{L^2}= 1.1\times 10^{-2}$ and
$$
\big||c_3(T)|-|c_1(0)|\big|=1.2\times 10^{-3}, \qquad \big||c_2(T)|-|c_3(0)|\big|=1.12\times 10^{-4},\qquad \big||c_1(T)|-|c_2(0)|\big|=4.01\times 10^{-3}.$$

\subsection{Experiment 2}
In this experiment, we follow the control strategy to change the energy of the first 4 Fourier modes according to the permutation $(\sigma(1),\sigma(2),\sigma(3), \sigma(4))=(2,4,3,1)$. Thus, we start with an initial data of the form 
\begin{equation} \label{exp2_T0}
\psi_0(x)=\psi(x,0)=\sum_{i=1}^4c_i(0)w_i(x),
\end{equation}
and we want to obtain at time $t=T$
\begin{equation} \label{exp2_T_a}
\psi(T,x)=\sum_{i=1}^4c_i(T)w_i(x),
\end{equation}
with
$$
|c_1(T)|=|c_4(0)|, \qquad |c_2(T)|=|c_1(0)|, \qquad |c_3(T)|=|c_3(0)|, \qquad |c_4(T)|=|c_2(0)|.
$$
We choose $c_1=0.5$, $c_2=1$, $c_3=1.5$, $c_4=2$ and $T=200$. 

As in the Experiment I, the three Diracs are introduced in the first time interval $[0,T/4]$ adiabatically to localize the four modes in consecutive subintervals of decreasing length $l_j^s\subset(0,1)$, $j=1,...,4$. The left subinterval $l_1^s$ is the largest and it contains the first mode, while the fourth mode is localized in the right one $l_4^s$. 
Then, we move the Diracs changing the lengths of these subintervals to obtain the prescribed permutation i.e. $l_2^f>l_4^f>l_3^f>l_1^f>l_2^f/2$, but maintaining the structure of the solution.   

In this example, we introduce the Diracs at the points $b_1^s=0.27$, $b_2^s=0.53$ and $b_3^s=0.77$ so that $l_1^s=0.27>l_2^s=0.26>l_3^s=0.24>l_4^s=0.23>l_1^s/2=0.135$.  Afterwards, we move the Diracs. Firstly, we fix $b_2^s, b_3^s$ and move $b_1^s$ from $0.27$ to $b_1^f=0.26$ with a transition at $x=0.265$ in the time interval $t\in[T/4,T/4+T/8]$. Secondly, we fix $b_1^f=0.26$, $b_3^s=0.77$ and move $b_2^s$ from $0.53$ to $\tilde b_2^f=0.5$ with three transitions at $x=0.52,$ $x=0.515$ and $x=0.51$, in the time interval $t\in[T/4+T/8,T/2]$. Thirdly, we fix $b_1^f=0.26$, $\tilde b_2^f=0.5$ and move $b_3^s$ from $0.77$ to $b_3^f=0.73$ with three transitions at $x=0.76$, $x=0.75$ and $x=0.74$, in the time interval $t\in[T/2,T/2+T/8]$. Finally, we fix $b_1^f=0.26$, $b_3^f=0.73$ and move again $\tilde b_2^f$ from $0.5$ to $b_2^f=0.49$ with a transition at  $x=0.495$ in the time interval $t\in [T/2+T/8,3T/4]$. Overall, we perform $8$ energy transitions at different times.
After these movements, we have $l_4^f=0.27>l_1^f=0.26>l_3^f=0.24>l_2^f=0.23>l_4^f/2=0.135$. The trajectories $a_1(t)$, $a_2(t)$, $a_3(t)$ are given in Figure \ref{fig3}.

\begin{figure}[h]
\begin{center}
\begin{tabular}{cc}
\includegraphics[height=4.5cm]{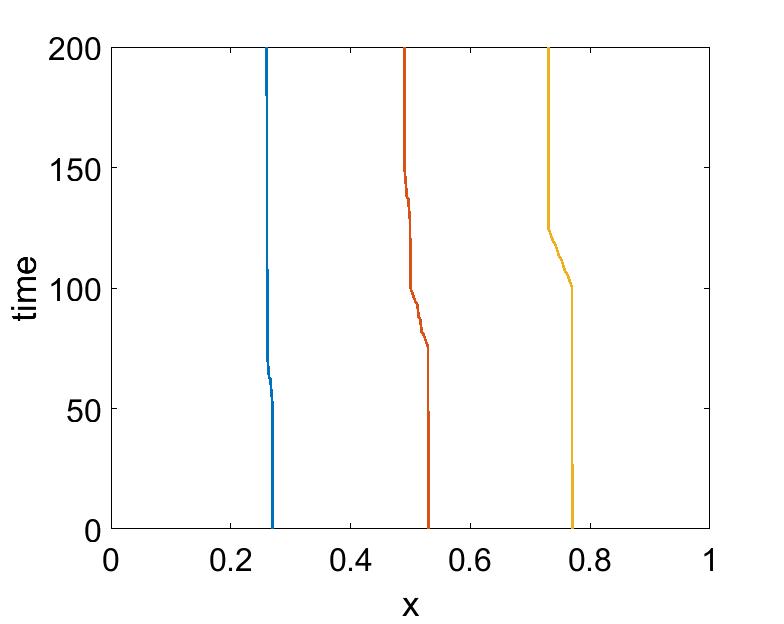} & \includegraphics[height=4.5cm]{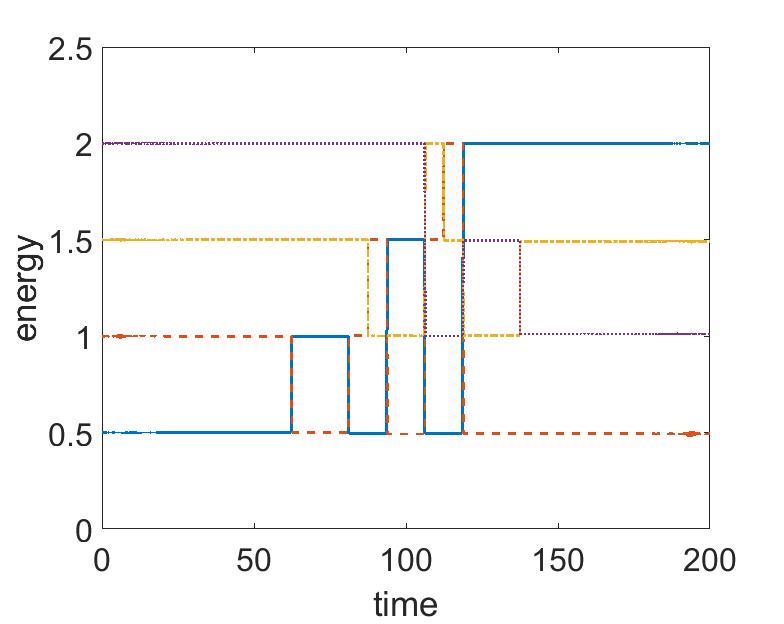} \\
Trajectories of the Diracs & energy jumps of the Fourier modes 
\end{tabular}
\end{center}
\caption{Experiment II: The figure represents the trajectories $a_1(t)$, $a_2(t)$ and $a_3$, and the energy jumps appearing during the dynamics \label{fig3}}
\end{figure}

In our experiment, we do not obtain exactly \eqref{exp2_T_a}. The initial data is given by \eqref{exp2_T0}. The solution at  time $T=200$ is given by 
\begin{equation} \label{exp2_T}
\psi(T,x)=\sum_{i=1}^4 c_i(T)w_i(x) + w(x),
\end{equation}
where $\|w\|_{L^2}= 5.76\times 10^{-2}$ and
\begin{eqnarray*}
\big||c_1(T)|-|c_4(0)|\big|&=&8.8\times 10^{-3}, \qquad \big||c_2(T)|-|c_1(0)|\big|=1.95\times 10^{-2},\\ 
\big||c_3(T)|-|c_3(0)|\big|&=&9.80\times 10^{-3},
\qquad \big||c_4(T)|-|c_2(0)|\big|=1.80\times 10^{-2}.
\end{eqnarray*}

\subsection{Comments}\label{comments}

As described in \cite{mio7}, from any initial data, it is possible to achieve not only a permutation but a complete redistribution between the energy modes with the type of controls described in this paper. This is done in the two following steps:
\begin{enumerate}
\item collapse of the energy associated to all the modes in the initial data to a single mode;
\item share of the energy associated to this single mode in order to obtain the desired final state.   
\end{enumerate} 
This strategy is valid in our model but the numerical approximation introduce an important difficulty since it requires to approximate an intermediate transition between the adiabatic and non-adiabatic dynamics. This approximation is very unstable and strongly depends on the energies we are considering, as we commented above in Figure \ref{fig1_a}. In addition, it is also very sensitive to the discretization parameters $N$ and the time step. This makes very difficult to design {\em a priori} the path for the support of the Dirac achieving this intermediate transition.  

For the permutation described in the previous experiments, the correct path combining the adiabatic regime and the transitions can be designed {\em a priori} following a simple strategy, as we have described. This provides a very explicit control which is easily approximated.  

\section*{Acknowledgements}

The first author acknowledges the support of Ministerio de Ciencia, Innovaci\'on y Universidades of
the Spanish government through the grant MTM2017-85934-C3-3-P. The second author was financially supported by the Agence nationale de la recherche of the French government through the grant {\it 
ISDEEC} (ANR-16-CE40-0013).


\begin{thebibliography}{99}


\bibitem{mio5}
K.~Ammari and A.~Duca. {\it Controllability of localized quantum states on infinite graphs
  through bilinear control fields}, To appear in Internat. J. Control, DOI:10.1080/00207179.2019.1680868 (2021).

\bibitem{mio6}
K.~Ammari and A.~Duca. {\it Controllability of periodic bilinear quantum systems on infinite
  graphs}, Journal of Mathematical Physics {\bf 61(10)}, 101507 (2020).


\bibitem{ASY}
J. E. Avron, R. Seiler and L. G. Yaffe, {\it Adiabatic theorems and applications to the quantum hall effect}, 
Communications in Mathematical Physics {\bf 110}, 22--49 (1987).



\bibitem{ball}
J.~M. Ball, J.~E. Marsden and M.~Slemrod. {\it Controllability for distributed bilinear systems}, SIAM J. Control Optim. {\bf 20(4)}, 575--597 (1982).

\bibitem{be1}
K.~Beauchard. {\it Local controllability of a 1-{D} {S}chr\"odinger equation}, J. Math. Pures Appl. (9) {\bf 84(7)}, 851--956 (2005).


 \bibitem{laurent}
K.~Beauchard and C.~Laurent. {\it Local controllability of 1{D} linear and nonlinear {S}chr\"odinger
  equations with bilinear control}, J. Math. Pures Appl. (9) {\bf 94(5)}, 520--554 (2010).

\bibitem{Bornemann}
F.~Bornemann,
``Homogenization in time of singularly perturbed mechanical
systems''.  Lecture Notes in Mathematics {\bf 1687}
Springer-Verlag, Berlin, 1998.

\bibitem{ugo}
U.~Boscain, M.~Caponigro, and M.~Sigalotti. {\it Multi-input {S}chr\"odinger equation: controllability, tracking, and application to the quantum angular momentum}, J. Differential Equations {\bf 256(11)}, 3524--3551 (2014).


\bibitem{ugo2}
U.~V. Boscain, F.~Chittaro, P.~Mason, and M.~Sigalotti. {\it Adiabatic control of the {S}chr\"odinger equation via conical intersections of the eigenvalues}, IEEE Trans. Automat. Control {\bf 57(8)}, 1970--1983 (2012).


 \bibitem{ugo3}
 U.~Boscain, J.~P. Gauthier, F.~Rossi, and M.~Sigalotti. {\it Approximate controllability, exact controllability, and conical eigenvalue intersections for quantum mechanical systems}, Comm. Math. Phys. {\bf 333(3)}, 1225--1239 (2015).


\bibitem{nabile}
N.~Boussa\"\i{}d, M.~Caponigro, and T.~Chambrion. {\it Weakly coupled systems in quantum control}, IEEE Trans. Automat. Control {\bf 58(9)}, 2205--2216 (2013).

  
\bibitem{mio3}
A.~Duca. {\it Bilinear quantum systems on compact graphs: well-posedness and global
  exact controllability}, Automatica J. IFAC, 123:109324 (2021).


\bibitem{mio2}
A.~Duca. {\it Controllability of bilinear quantum systems in explicit times via
  explicit control fields},  Internat. J. Control {\bf 94(3)}, 724--734 (2021).

  

\bibitem{mio4}
A.~Duca. {\it Global exact controllability of bilinear quantum systems on compact
  graphs and energetic controllability}, SIAM J. Control Optim. {\bf 58(6)}, 3092--3129 (2020).

  
  
  
\bibitem{mio1}
A.~Duca. {\it Simultaneous global exact controllability in projection of infinite
  1d bilinear Schr\"odinger equations}, Dynamics of Partial Differential Equations {\bf 17(3)}, 275--306 (2020).
  
\bibitem{mio9}A.~Duca and R.~Joly,
\newblock Schr\"odinger equation in moving domains.
\newblock {\em preprint, https://hal.archives-ouvertes.fr/hal-02736016}, (2020).


\bibitem{mio7}
A.~Duca, R.~Joly and D.~Turaev.
\newblock Permuting quantum eigenmodes by a quasi-adiabatic motion of the
  potential wall.
\newblock {\em preprint, https://arxiv.org/abs/1912.07425}, (2019).



\bibitem{Ga}
L. M. Garrido, {\it Generalized adiabatic invariance}.  Journal of Mathematical Physics {\bf 5}, 335--362 (1964).


\bibitem{Geygel}
V. A. Geyler and I. V. Chudaev, {\it Schr\"{o}dinger operators with moving point perturbations and
              related solvable models of quantum mechanical systems}. Z. Anal. Anwendungen {\bf 17}, {37--55} (1998).
              
              
              

\bibitem{JoPf} A. Joye and C.E. Pfister, {\it Quantum adiabatic evolution, in: On Three Levels}
(eds. M. Fannes, C. Maes, A. Verbeure), Plenum, New York, 1994, pp. 139--148.

\bibitem{Kisy}
J.~Kisy\'{n}ski,
``Sur les op\'{e}rateurs de {G}reen des probl\`emes de {C}auchy abstraits'',
Studia Math. {\bf  23}, 285--328 (1964).




\bibitem{milo}
 M.~Mirrahimi.{\it Lyapunov control of a quantum particle in a decaying potential}, Ann. Inst. H. Poincar\'e Anal. Non Lin\'eaire {\bf 26(5)},
 1743--1765 (2009).
   
   
   
\bibitem{morgane1}
M.~Morancey. {\it Simultaneous local exact controllability of 1{D} bilinear
  {S}chr\"odinger equations}, Ann. Inst. H. Poincar\'e Anal. Non Lin\'eaire {\bf 31(3)}, 501--529 (2014).
   
   
  \bibitem{morganerse2}
M.~Morancey and V.~Nersesyan. {\it Simultaneous global exact controllability of an arbitrary number of
  1{D} bilinear {S}chr\"odinger equations}, J. Math. Pures Appl. (9) {\bf 103(1)}, 228--254 (2015).

  \bibitem{Moyano}
I. Moyano, {\it Controllability of a 2D quantum particle in a time-varying disc with radial data}, Journal of Mathematical Analysis and Applications {\bf 455}, 1323--1350 (2017). 
   
\bibitem{Nen} G. Nenciu. {\it On the adiabatic theorem of quantum mechanics}, Journal of Physics A: Mathematical 
and General {\bf 13}, 15--18 (1980).

 \bibitem{nerse2}
 V.~Nersesyan. {\it Global approximate controllability for {S}chr\"odinger equation in
   higher {S}obolev norms and applications}, Ann. Inst. H. Poincar\'e Anal. Non Lin\'eaire{\bf 27(3)}, 901--915 (2010).
 


\bibitem{Reed-Simon}
M.~Reed and B.~Simon,
``Methods of Modern Mathematical Physics IV: Analysis of Operators''.
Academic Press, 1978.


\bibitem{Teufel}
S.~Teufel,
``Adiabatic perturbation theory in quantum dynamics''.
Lecture Notes in Mathematics {\bf 1821}.
Springer-Verlag, Berlin, 2003.


\bibitem{Dmitry}
D.~Turaev, {\it Exponential energy growth due to slow parameter oscillations in quantum mechanical
systems}, Phys. Rev. E 93, 050203(R) (2016)

\end{thebibliography}
\end{document}